 \def\cocoa{{\hbox{\rm C\kern-.13em o\kern-.07em C\kern-.13em o\kern-.15em A}}}
\newtheorem{theorem}{Theorem}[section]
\newtheorem{lemma}[theorem]{Lemma}
\newtheorem{proposition}[theorem]{Proposition}
\newtheorem{corollary}[theorem]{Corollary}
\theoremstyle{definition}
\newtheorem{definition}[theorem]{Definition}
\newtheorem{question}[theorem]{Question}
\newcommand {\Hom}{\mathcal{H}\kern -0.25ex{\mathit om}}
\newcommand {\Ext}{\mathcal{E}\kern -0.25ex{\mathit xt}}
\newcommand {\rk}{\mathrm{rk}}
\newcommand {\ext}{\mathrm{Ext}}
\newcommand {\Hilb}{\mathcal{H}\kern -0.25ex{\mathit ilb\/}}
\newcommand {\bP}{\mathbb{P}}
\newcommand {\ZZ}{\mathbb{Z}}
\newcommand{\cE}{{\mathcal E}}
\newcommand{\ccL}{{\mathcal L}}
\newcommand{\cF}{{\mathcal F}}
\newcommand{\cM}{{\mathcal M}}
\newcommand{\cO}{{\mathcal O}}
\newcommand{\cI}{{\mathcal I}}
\newcommand{\Pic}{\operatorname{Pic}}
\def\p#1{{\bP^{#1}}}
\title[Ulrich bundles on Geometrically ruled surfaces]{Theta divisors and Ulrich bundles\\ on Geometrically ruled surfaces}
\subjclass[2010]{Primary 14J60; Secondary 14J26}
\keywords{Vector bundle, Ulrich bundle, geometrically ruled surface.}
\author[M. Aprodu, G. Casnati, L. Costa, R.M. Mir\'o-Roig, M. Teixidor i Bigas]{Marian Aprodu, Gianfranco Casnati, Laura Costa, Rosa Maria Mir\'o-Roig, Montserrat Teixidor i Bigas}
\thanks{The first author was partly supported by a grant of Ministery of Research and Innovation, CNCS - UEFISCDI, project number PN-III-P4-ID-PCE-2016-0030, within PNCDI III. The second author is a member of GNSAGA group of INdAM and is partially supported by the framework of PRIN 2015 \lq Geometry of Algebraic Varieties\rq, cofinanced by MIUR. The third and fourth authors have been partially supported by the grant MTM2016-78623-P}
\address{Facultatea de Matematic\u a \c si Informatic\u a, Universitatea din Bucure\c sti, Str. Academiei 14, 010014 Bucure\c sti, ROMANIA \& Institutul de Matematic\u a \lq\lq Simion Stoilow\rq\rq\ al Academiei Rom\^ane, Calea Grivi\c tei 21, Sector 1, 010702 Bucure\c sti, ROMANIA}
\email{marian.aprodu@fmi.unibuc.ro \& marian.aprodu@imar.ro}
\address{Dipartimento di Scienze Matematiche, Politecnico di Torino, c.so Duca degli Abruzzi 24, 10129 Torino, ITALY}
\email{gianfranco.casnati@polito.it}
\address{Facultat de Matem\`atiques i Inform\`{a}tica,
Departament de Matem\`{a}tiques i Inform\`{a}tica, Gran Via de les Corts Catalanes
585, 08007 Barcelona, SPAIN } \email{costa@ub.edu}
\address{Facultat de Matem\`atiques i Inform\`{a}tica,
Departament de Matem\`{a}tiques i Inform\`{a}tica, Gran Via de les Corts Catalanes
585, 08007 Barcelona, SPAIN } \email{miro@ub.edu}
\address{Mathematics Department, Tufts University, 503 Boston Avenue, Medford MA 02155, USA} \email{montserrat.teixidoribigas@tufts.edu}
\begin{document}

\begin{abstract}
%The existence of Ulrich bundles of low rank on a surface $X$, reflects important geometric properties of $X$.  For instance,  if $X$ supports an Ulrich line bundle, then its associated Chow form is linear determinantal  and if $X$ supports special Ulrich rank two bundles then the Chow form of $X$ is linear pfaffian.
We consider the following question: for which invariants $g$ and $e$ is there a geometrically ruled surface   $S \rightarrow C$  over a curve $C$ of genus $g$ with  invariant $e$ such that $S$ is the support of an Ulrich line bundle with respect to a very ample line bundle?
A surprising relation between the existence of certain proper Theta divisors on some moduli spaces of vector bundles on $C$ with the existence of Ulrich line bundles on $S$ will be the key to completely solve the above question. The relation is realized by translating the vanishing conditions characterizing Ulrich line bundles to specific geometric conditions on the symmetric powers of the defining vector bundle of a given ruled surface. This general principle leads to some finer existence results of Ulrich line bundles in particular cases.
Another focus is on the rank two case where, with very few exceptions, we show the existence of large families of special Ulrich bundles on arbitrary polarized ruled surfaces.
% In this paper we characterize  geometrically ruled surface  $S \rightarrow C$,  with invariant $e<0$  where $ C$ is a curve of genus $g$
%  such that $S$ is the support of an Ulrich line bundle with respect to a very ample line bundle.
%  A surprising relation between the existence of certain proper Theta divisors on some moduli spaces of vector bundles on $C$ with the existence
%  of Ulrich line bundles on $S$ will be the key to completely solve the above question.
%   We end the paper determining the existence of special rank two Ulrich bundles on $S$.
\end{abstract}

\maketitle

\section{Introduction}
Let $X\subseteq\p N$ be a smooth projective variety of dimension $n$ and set $\cO_X(h):=\cO_{\p N}(1)\otimes\cO_X$.
A   vector bundle $\cF$ on $X$ is an  {\sl Ulrich bundle with respect to $\cO_X(h)$},  if
$$
h^i\big(X,\cF(-ih)\big)=h^j\big(X,\cF(-(j+1)h)\big)=0
$$
for each $i>0$ and $j<n$.
For the other equivalent definitions as well as a study of the properties of Ulrich bundles,
 we refer the interested reader to the papers by D. Eisenbud, F.-O. Schreyer and J. Weyman \cite{ES03} and by M. Casanellas and R. Hartshorne \cite{C--H2}.

Ulrich bundles come in pairs, if $\cF$ is Ulrich then so is its \emph{Ulrich dual} $\cF^*((n+1)h+K_X)$, see \cite{A--C--MR}. \emph{Special Ulrich  bundles} are Ulrich self-dual rank-two bundles, \cite{ES03}. Note that, if $\mathcal L$ is an Ulrich line bundle and $\mathcal L'$ is its Ulrich dual,
    then any extension of $\mathcal L'$ by $\mathcal L$ is a special Ulrich bundle.

\medskip

The existence of Ulrich bundles of low rank on a surface $X$, reflects important geometric properties of $X$.
For instance,  if $X$ supports an Ulrich line bundle, then its associated Cayley-Chow form is linear determinantal
 and if $X$ supports special Ulrich rank two bundles then the Cayley-Chow form of $X$ is linear pfaffian (see \cite{Bea} and \cite{ES03}).

\medskip

In this paper we are interested in low rank Ulrich bundles on geometrically ruled surfaces.
Recall that if  $C$ is a smooth curve of genus $g$, then a rank $2$ bundle $\cE$ on $C$ is called {\sl normalized}
 if $h^0\big(C,\cE\big) >0$ and $h^0\big(C,\cE(\frak v)\big)=0$
 for each divisor $\frak v$ on $C$ of negative degree.
  We denote by $\pi\colon S:=\Bbb P(\cE) \rightarrow C$ the geometrically ruled surface defined by a normalized $\cE$.
    Note that this is not restrictive as the ruled surface defined by a vector bundle $\cE$ is isomorphic to  the surface defined by the vector bundle $\cE\otimes \mathcal L^{-1}$
    for any line subbundle $\mathcal L\subseteq\cE$ of maximal degree.   We set  $\frak e:=\bigwedge^2\cE$ and we define {\sl the invariant $e$ of $S$} as the number $e:=-\deg(\frak e)$.

The Picard group of $S$ is generated by $\pi^*\Pic(C)$ and by the class of any effective divisor $C_0$ corresponding to a non-zero section in $H^0\big(S,\cO_S(1)\big)$,
which is isomorphic to $H^0\big(C,\cE\big)$ by the projection formula.
Following \cite{Ha}; Chapter V, Notation 2.8.1, if $\frak b$ is a divisor on $C$ we will write $\frak b f$ instead of $\pi^*\frak b$.
Thus the class of each divisor $D$ on $S$ can be written uniquely as $aC_0+\frak b f$.
For instance,  the canonical divisor $K_S$ on $S$ is in the class $-2C_0+(\frak k+\frak e)f$, $\frak k$ being the canonical divisor on $C$.

The intersection pairing on $S$ is given by $C_0^2=-e$, $C_0 f=1$, $f^2=0$, and we recall that
$$
e=\min\{\ D^2\ \vert\ \text{$D$ is an effective divisor on $S$ with $Df=1$}\ \}.
$$
M. Nagata proved that $e\ge-g$  in \cite{Na}. Moreover, once the curve $C$ is fixed, it is well known that each value satisfying such an inequality is actually attained by some geometrically ruled surface on $C$ (see \cite{Ha}, Theorem V.2.12, Exercise V.2.5 and the references therein).

\vspace{3mm}

In this setting, it is natural to state the  following question:

\begin{question}
\label{q0}
Let $\pi\colon S:=\Bbb P(\cE) \rightarrow C$ be a geometrically ruled surface and consider  $\cO_S(h)$ a very ample line bundle on $S$.
\begin{itemize}
\item[(a)] Are there Ulrich line bundles on $S$ with respect to $\cO_S(h)$?
\item[(b)] More generally, what is the minimal rank of Ulrich vector bundles on $S$?
\end{itemize}
\end{question}

In  \cite{A--C--MR}, a subset of the authors prove that if $e>0$ and $h:=aC_0+\frak b f$ is very ample, then the minimal rank $r$ of an Ulrich bundle
with respect to $\cO_S(h)$ is $r=1$ if and only if $a=1$  and it is $r=2$ for $a=2$.
Moreover, they prove that if $a\ge3$ and either  $g\le1$ or some  additional conditions  on the numbers $a$, $\deg(\frak b)$, $g$, $e$  are satisfied,
 then one  still has $r=2$.
In particular, for $e>0$ and $a>1$ this gives a negative answer to Question \ref{q0} (a).

\vspace{3mm}

In this paper,  we deal with the case  $e\le0$. From the properties of ruled surfaces, we know that $-g \leq e < 0$ (see \cite{Ha}; Chapter V, Exercise 2.5).  We will show that $\frac{(a-1)e}{2} \in \ZZ$ is a necessary condition for the existence, see Proposition \ref{pExistence}.

Our first result is the following statement which summarizes Theorems \ref{tMain} and \ref{tExistenceR} giving a positive answer to Question \ref{q0} (a).

\medbreak
\noindent {\bf Theorem A.}
{\it Let $C$ be a curve of genus $g$, $\cE$ a normalised rank $2$ bundle on $C$ with $e\le 0$ and $h:=aC_0+\frak b f$ a very ample divisor on $S\cong\Bbb P(\cE)$.

\begin{enumerate}
\item If $(a-1)e=0$, then there are Ulrich line bundles on $S$ with respect to $\cO_S(h)$.
\item If $g=1$ and $e<0$, then there are Ulrich line bundles on $S$ with respect to $\cO_S(h)$ if and only if $a$ is odd.
\item If $a=2$ and $e<0$, then there are Ulrich line bundles on $S$ with respect to $\cO_S(h)$ if and only if $e$ is even.
\item If $a=3$, $e<0$ and $g=2$, then there are Ulrich line bundles on $S$ with respect to $\cO_S(h)$.
\item If $a=3$ and  $C$ is general in its moduli space, then there are Ulrich line bundles on $S$ with respect to $\cO_S(h)$.
\end{enumerate}}
\medbreak

When $a \ge 2$ and $e<0$ the picture seems to be very intricate. As shown in Proposition \ref{pExistence} the description of Ulrich line bundles is strictly related to the existence of suitable generic vanishing results for symmetric powers of rank two bundles on curves.
For this reason we formulate the following related question:

\begin{question}
 \label{mainquestion} Let $a$, $g$, and $e$ be integers such that $-g \leq e < 0$, $a \geq 2$ and $\frac{(a-1)e}{2} \in \ZZ$.
 Is there a geometrically ruled surface   $S \rightarrow C$  over a curve $C$ of genus $g$ with  invariant $e$ such that $S$ is the support of an Ulrich line bundle with respect to
$ \cO_S(aC_0+\frak b f)$?
\end{question}

Relating the existence of Ulrich line bundles with the existence of proper Theta divisors on some moduli spaces of vector bundles on $C$ we will be able to give a positive answer to Question \ref{mainquestion} as follows (see Theorem \ref{respq1}).

\medbreak
\noindent {\bf Theorem B.}
{\it Let $a$, $g$, and $e$ be integers such that $-g \leq e < 0$, $a \geq 2$ and $\frac{(a-1)e}{2} \in \ZZ$. Then there exist a geometrically ruled surface   $S \rightarrow C$  over a curve $C$ of genus $g$ with  invariant $e$ such that $S$ is the support of an Ulrich line bundle with respect to
$ \cO_S(aC_0+\frak b f)$.}
\medbreak

\vspace{3mm}

%The difficulty of Question \ref{q0} (b) resides in an apparent irregular pattern of the existence results in the rank-one case.
% It seems then that the existence of Ulrich line bundles follows an irregular pattern.
 We then  focus on Question \ref{q0} (b). We conclude the paper by showing the existence of large families of rank two Ulrich vector bundles with respect to $\cO_S(aC_0+\frak b f)$.  For $a=1$ they can be constructed
 as an extensions of Ulrich line bundles. For  $a \geq 2$ and some mild conditions on  $\frak b$ we construct them in Theorems \ref{tACMR} and \ref{tStable}.

\medbreak

{\bf Notation:}
Throughout this note we will work on an algebraically closed field $k$ of characteristic $0$ and $\p N$ will denote the projective space over $k$ of dimension $N$.
The words curve and surface will always denote projective smooth connected objects. In several places, we shall mix the multiplicative notation for line bundles and the additive notation for divisors.

%%%%%%%%%%%%%%%%%%%%%%%%%%%%%%%%%%%%%%%%%%%%%%%%%%%%%%%%%%%%%%%%%%%%%%%%%%%%%%%%%%%%%%%%%%%%%%%%%%%
\section{Ulrich line bundles on ruled surfaces}
\label{sRank1}
The goal of this section is to determine the existence of Ulrich line bundles on a geometrically ruled surface $S$ with negative invariant $e$
and in particular to prove Theorem A stated in the introduction.
We start by recalling some useful facts.

If $D:=tC_0+\frak d f$ with $t\ge0$ is a divisor on $S$, then Lemma V.2.4, Exercises III.8.3 and III.8.4 of \cite{Ha} imply
\begin{equation}
\label{Projection}
h^i\big(S,\cO_S(D)\big)=h^i\big(C,(S^t\cE)(\frak d)\big)
\end{equation}
where $S^t\cE$ stands for the $t$-th symmetric power of $\cE$.

On the other hand, since $\cE$ is normalized, there is an everywhere non-zero section in $H^0\big(C,\cE\big)$ defining the  exact sequence
\begin{equation*}
\label{seqExtension}
0\longrightarrow\cO_C\longrightarrow\cE\longrightarrow\cO_C(\frak e)\longrightarrow0
\end{equation*}
(see the proof of \cite{Ha}, Theorem V.2.12). Notice that  such an extension corresponds to an element $\xi\in H^1\big(C,\cO_C(-\frak e)\big)$.

Thus there also exists an exact sequence of the form (see the proof of Lemma 7.6 of \cite{E--H})
\begin{equation*}
\label{seqSymmetric}
0\longrightarrow S^{t-1}\cE\longrightarrow S^t\cE\longrightarrow\cO_C(t\frak e)\longrightarrow0
\end{equation*}
where $t\ge1$. Due to its construction, such an extension depends on the choice of $\xi$ and $t$.

Easy induction on $t$ using \eqref{Projection} yields
\begin{gather}
\label{BoundLine}
h^0\big(C,(S^t\cE)(\frak d)\big)\ge h^0\big(C,\cO_C(\frak d)\big),\\
\label{BoundSymmetric}
h^0\big(C,(S^t\cE)(\frak d)\big)\le \sum_{i=0}^th^0\big(C,\cO_C(\frak d+i\frak e)\big).
\end{gather}
for each divisor $\frak d$ on $C$.

The following lemma is a particular case of \cite{Cs4}; Corollary 2.2.

\begin{lemma}
\label{lUlrichLine}
Let $C$ be a curve of genus $g$, $\cE$ a normalized rank $2$ vector bundle on $C$ and $h:=aC_0+\frak b f$ a very ample divisor on $S\cong\Bbb P(\cE)$.

The line bundle $\cO_S(D)$ is Ulrich with respect to $\cO_S(h)$ if and only if $h^0\big(S,\cO_S(D-h)\big)=h^0\big(S,\cO_S(2h+K_S-D)\big)=0$ and
\begin{equation}
\label{eqLineBundle}
D^2=2(h^2-1+g)+DK_S,\qquad Dh=\frac12(3h^2+hK_S).
\end{equation}
\end{lemma}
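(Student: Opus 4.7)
The plan is to unpack the Ulrich condition on a surface into its four constituent vanishings and then run a standard Riemann--Roch--plus--Serre--duality argument, using crucially that $h$ is effective (being very ample) so that $h^0$ is monotonic under subtracting multiples of $h$.

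First, I would rewrite the Ulrich condition for a line bundle on the surface $S$ ($n=2$) as the four vanishings
\[
h^0\bigl(S,\cO_S(D-h)\bigr)=h^1\bigl(S,\cO_S(D-h)\bigr)=h^1\bigl(S,\cO_S(D-2h)\bigr)=h^2\bigl(S,\cO_S(D-2h)\bigr)=0,
\]
and immediately translate the last one via Serre duality into $h^0\bigl(S,\cO_S(2h+K_S-D)\bigr)=0$. This already matches the two $h^0$-vanishings in the statement; the remaining content of the lemma is therefore to show that, in the presence of these two vanishings, the numerical conditions \eqref{eqLineBundle} are equivalent to the vanishing of the two middle $h^1$'s.

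For the ``only if'' direction, the $h^0$-vanishings are part of the definition. To derive the numerical conditions I would apply Riemann--Roch on the surface,
\[
\chi\bigl(\cO_S(E)\bigr)=\tfrac12 E(E-K_S)+\chi(\cO_S),
\]
to $E=D-h$ and $E=D-2h$, using that for the geometrically ruled surface $\pi\colon S\to C$ the Leray spectral sequence gives $\chi(\cO_S)=1-g$. Since the Ulrich property forces $\chi\bigl(\cO_S(D-h)\bigr)=\chi\bigl(\cO_S(D-2h)\bigr)=0$, subtracting the two expanded Riemann--Roch formulas cancels the $D^2$ and $DK_S$ terms and yields the second equation $Dh=\tfrac12(3h^2+hK_S)$, while substituting this back into either equation yields the first equation $D^2=2(h^2-1+g)+DK_S$.

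For the ``if'' direction, by Riemann--Roch the numerical conditions are exactly equivalent to $\chi\bigl(\cO_S(D-h)\bigr)=\chi\bigl(\cO_S(D-2h)\bigr)=0$. Since $h$ is very ample, hence effective, the monotonicity of $h^0$ under subtraction of an effective divisor gives
\[
h^0\bigl(S,\cO_S(D-2h)\bigr)\le h^0\bigl(S,\cO_S(D-h)\bigr)=0,
\]
and similarly $h^0\bigl(S,\cO_S(h+K_S-D)\bigr)\le h^0\bigl(S,\cO_S(2h+K_S-D)\bigr)=0$, which by Serre duality means $h^2\bigl(S,\cO_S(D-h)\bigr)=0$. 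Plugging these into $\chi=h^0-h^1+h^2=0$ for both $D-h$ and $D-2h$ forces the two remaining $h^1$'s to vanish, completing the proof.

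There is no real obstacle here; the argument is purely formal. The only point where one must be careful is to invoke the effectivity of $h$ in the right place (to propagate $h^0$-vanishings from $D-h$ to $D-2h$ and from $2h+K_S-D$ to $h+K_S-D$), and to remember that on a geometrically ruled surface over a curve of genus $g$ one has $\chi(\cO_S)=1-g$ rather than $1$.
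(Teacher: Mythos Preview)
Your argument is correct: the four cohomology vanishings defining the Ulrich property for a line bundle on a surface are exactly the two displayed $h^0$-vanishings plus the two $h^1$-vanishings, and you show cleanly via Riemann--Roch (with $\chi(\cO_S)=1-g$) and the monotonicity of $h^0$ under subtraction of the effective divisor $h$ that, once the $h^0$-vanishings are in place, the numerical conditions \eqref{eqLineBundle} are equivalent to the $h^1$-vanishings.

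The paper, however, does not prove this lemma at all: it simply records it as a particular case of Corollary~2.2 in \cite{Cs4}. So your proof is not the same approach as the paper's --- rather, it supplies a self-contained argument where the paper only cites. What you gain is independence from the external reference; what the citation buys is brevity. Both are entirely legitimate here.
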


The following result is the main part of \cite{A--C--MR}; Theorem 2.1  where we  set
$$
d(a,g,e):=g-1+\frac{(a-1)e}2.
$$
For the reader's benefit we repeat here the proof under our current assumptions, mainly with no restrictions on $e$.

%For the reader's benefit we repeat here the proof with the very small changes that we need.

\begin{proposition}
\label{pExistence}
Let $C$ be a curve of genus $g$, $\cE$ a normalized rank $2$ vector bundle on $C$ and $h:=aC_0+\frak b f$ a very ample divisor on $S\cong\Bbb P(\cE)$.

There is an Ulrich line bundle on $S$ with respect to $\cO_S(h)$ if and only if  $d(a,g,e)\in \mathbb{Z}$ and there
exist divisors $\frak u\in\Pic^{d(a,g,e)}(C)$ satisfying
\begin{equation}
\label{frakU}
h^0\big(C,(S^{a-1}\cE)(\frak u)\big)=0.
\end{equation}
The Ulrich line bundles on $S$ are exactly the ones of the form
$$
\cO_S((2a-1)C_0+(\frak b+\frak u)f),
$$
and their Ulrich duals
$$
\cO_S((a-1)C_0+(2\frak b+\frak k+\frak e-\frak u)f),
$$
for each $\frak u$ on $C$ satisfying condition \eqref{frakU} above.
In particular, if $(a-1)e$ is odd, then there are no Ulrich bundles on $S$ with respect to $\mathcal{O}_S(h)$.
\end{proposition}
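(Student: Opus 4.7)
The plan is to apply Lemma~\ref{lUlrichLine} to a prospective Ulrich divisor $D=tC_0+\frak d f$: first translate the numerical conditions of \eqref{eqLineBundle} into explicit equations in $t$ and $d:=\deg(\frak d)$, solve them completely, and then check which of the two cohomological vanishings in the lemma are automatic and which encode the condition \eqref{frakU}.

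I would begin by expanding the intersection numbers $h^2$, $hK_S$, $Dh$, $D^2$, $DK_S$ using $C_0^2=-e$, $C_0\cdot f=1$, $f^2=0$ and $K_S=-2C_0+(\frak k+\frak e)f$. Substituting into $Dh=\tfrac12(3h^2+hK_S)$ produces a linear relation that lets me express $d$ as an affine function of $t$. Plugging this into $D^2=2(h^2-1+g)+DK_S$ collapses the system to a single quadratic in $t$; after cancelling the non-zero factor $2b-ae=h^2/a$ (non-zero since $h$ is very ample and $a\geq1$), it simplifies to
$$
t^2-(3a-2)t+(2a-1)(a-1)=0,
$$
which factors as $\bigl(t-(2a-1)\bigr)\bigl(t-(a-1)\bigr)=0$. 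Solving back for $d$, the root $t=2a-1$ forces $\frak d=\frak b+\frak u$ with $\deg(\frak u)=d(a,g,e)$, while $t=a-1$ forces $\frak d=2\frak b+\frak k+\frak e-\frak u$ for the same $\frak u$. In particular, the very existence of a numerical solution already requires $d(a,g,e)\in\ZZ$, which yields the last clause about the parity of $(a-1)e$.

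It remains to interpret the two cohomological vanishings of Lemma~\ref{lUlrichLine}. For $t=2a-1$ one has $D-h=(a-1)C_0+\frak u f$ and $2h+K_S-D=-C_0+(\frak b+\frak k+\frak e-\frak u)f$; by \eqref{Projection} the first vanishing reads $h^0\bigl(C,(S^{a-1}\cE)(\frak u)\bigr)=0$, while the second is automatic because $\pi_*\cO_S(-C_0)=0$, the line bundle $\cO_S(-C_0)$ being the relative $\cO(-1)$ on $\pi\colon\Bbb P(\cE)\to C$. For $t=a-1$ the computation is identical with the two vanishings interchanged; this is forced by the symmetry $D\mapsto 3h+K_S-D$, which realizes the Ulrich duality and sends $(2a-1)C_0+(\frak b+\frak u)f$ to $(a-1)C_0+(2\frak b+\frak k+\frak e-\frak u)f$. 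In both situations the nontrivial requirement is precisely \eqref{frakU}. The only step that is not pure bookkeeping is the reduction of the second numerical equation to a quadratic in $t$ alone together with the clean factorization above: this is where the two constraints conspire to isolate exactly an Ulrich line bundle and its Ulrich dual.
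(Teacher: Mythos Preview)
Your argument is correct, and it reaches the same conclusion by a slightly different route from the paper.

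The paper writes $\mathcal L=\cO_S(a_1C_0+\frak b_1f)\otimes\cO_S(h)$, pairs it with its Ulrich dual $\cM=\cO_S(a_2C_0+\frak b_2f)\otimes\cO_S(h)$, and uses only the single condition $\chi(\mathcal L(-h))=\chi(\cM(-h))=0$ for each. Riemann--Roch then yields the factored equations $(a_i+1)\bigl(\deg(\frak b_i)-d(a_i+1,g,e)\bigr)=0$, and the constraints $a_1+a_2=a-2$, $\frak b_1+\frak b_2=\frak b+\frak k+\frak e$ force a short case analysis (ruling out the possibilities $\deg(\frak b_1)=d(a_1+1,g,e)=\deg(\frak b_2)$ via $h^2\neq 0$, and $a_1=a_2=-1$ via $a\geq 1$) before arriving at $a_1=-1$, $a_2=a-1$.

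You instead feed both numerical equations of Lemma~\ref{lUlrichLine} into a single unknown divisor $D=tC_0+\frak d f$: the first equation solves $\deg(\frak d)$ linearly in $t$, and substitution into the second produces, after cancelling the factor $2\deg(\frak b)-ae=h^2/a\neq 0$, the quadratic $(t-(2a-1))(t-(a-1))=0$. This eliminates the case analysis entirely at the cost of a longer direct computation. The two methods are equivalent at heart: the paper's product formulas for $\mathcal L$ and $\cM$ encode the same quadratic, since $\chi(\cM(-h))=\chi(\cO_S(D-2h))$ and the pair $\chi(\cO_S(D-h))=\chi(\cO_S(D-2h))=0$ is exactly equivalent to \eqref{eqLineBundle}. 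Your treatment of the cohomological vanishings via $\pi_*\cO_S(-C_0)=0$ and the projection formula matches the paper's, and the observation that the two solutions are exchanged by $D\mapsto 3h+K_S-D$ is the Ulrich duality the paper builds in from the start.
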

\begin{proof}
Assume that $S$ supports an Ulrich line bundle $\mathcal L\cong\cO_S(a_1C_0+\frak b_1f)\otimes\cO_S(h)$. Thus its Ulrich dual $\cM:=\cO_S(3h-2C_0+(\frak k+\frak e) f)\otimes\mathcal L^\vee$ is also an Ulrich bundle.

In particular, if $\cM\cong\cO_S(a_2C_0+\frak b_2f)\otimes\cO_S(h)$, then
$$
(a_1+a_2)C_0+(\frak b_1+\frak b_2)f=h-2C_0+(\frak k+\frak e) f=(a-2)C_0+(\frak b+\frak k+\frak e)f.
$$
Since both $\mathcal L$ and $\cM$ are assumed to be Ulrich with respect to $\cO_S(h)$, it follows that $\chi(\mathcal L(-h))=\chi(\cM(-h))=0$. Thus, a direct computation via the Riemann--Roch theorem as in the first part of the proof of
 \cite{A--C--MR};  Theorem 2.1 yields the vanishing

$$
(a_i+1)\left(\deg(\frak b_i)-d(a_i+1,g,e)\right)=0, \qquad i=1,2.
$$
If $\deg(\frak b_i)=d(a_i+1,g,e)$, for $i=1,2$, then the equality $\frak b_1+\frak b_2=\frak b+\frak k+\frak e$ yields $\deg(\frak b)=ae/2$. It would follow $h^2=0$, trivially contradicting the ampleness of $\cO_S(h)$.
If $a_1=a_2=-1$, then $a=0$, again a contradiction.
Thus we can assume $a_1=-1$ and $a_2=a-1$, whence $\deg(\frak b_2)=d(a,g,e)$ and
\begin{gather*}
\mathcal L\cong\cO_S(-C_0+(\frak b+\frak k+\frak e-\frak b_2)f)\otimes\cO_S(h),\qquad
\mathcal M\cong\cO_S((a-1)C_0+\frak b_2f)\otimes\cO_S(h).
\end{gather*}

Let $D:=(a-1)C_0+{\frak b}_{2f+h}$,
 so that $\cO_S(D)\cong \cM $  and $\cO_S(3h+K_S-D) \cong \mathcal L$.
  The divisor $D$ satisfies the equalities \eqref{eqLineBundle} and $h^0\big(S,\cO_S(2h+K_S-D) \big)=h^0\big(S,\mathcal L (-h)\big)=0$.

On the other hand, according to (\ref{Projection}),
$$
h^0\big(S,\cO_S(D-h)\big)=h^0\big(C,(S^{a-1}\cE)\otimes\cO_C(\frak b_2)\big).
$$

Thus, taking $\frak u:=\frak b_2$ the statement follows from Lemma \ref{lUlrichLine}.
\end{proof}

For the proof of the following result see \cite{A--C--MR}, Theorem 2.1.

\begin{proposition}
\label{pA--C--MR}
Let $C$ be a curve of genus $g$, $\cE$ a normalized rank $2$ vector bundle on $C$ and $h:=aC_0+\frak b f$ a very ample divisor on $S\cong\Bbb P(\cE)$.

If $e>0$, then there are Ulrich line bundles on $S$ with respect to $\cO_S(h)$ if and only if $a=1$.
\end{proposition}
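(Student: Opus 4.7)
The plan is to apply the characterization given by Proposition \ref{pExistence}: an Ulrich line bundle exists if and only if $d(a,g,e) = g - 1 + (a-1)e/2$ is an integer and some divisor $\frak u \in \Pic^{d(a,g,e)}(C)$ satisfies $h^0(C, (S^{a-1}\cE)(\frak u)) = 0$. So the task reduces to analyzing this last vanishing condition under the hypothesis $e > 0$, and separating the cases $a = 1$ and $a \geq 2$.

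For the sufficiency direction ($a = 1$), the condition collapses to asking for $\frak u \in \Pic^{g-1}(C)$ with $h^0(C, \cO_C(\frak u)) = 0$, since $S^0\cE = \cO_C$. If $g \geq 1$, a general line bundle of degree $g-1$ has no sections (the theta divisor in $\Pic^{g-1}(C)$ is a proper subvariety). If $g = 0$, then $\Pic^{-1}(\mathbb{P}^1) = \{\cO_{\mathbb{P}^1}(-1)\}$, which indeed has no global sections. In either case, Proposition \ref{pExistence} produces an Ulrich line bundle.

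For the necessity direction ($a \geq 2$), we argue that no such $\frak u$ exists. If $(a-1)e$ is odd, the last sentence of Proposition \ref{pExistence} gives the claim immediately. If $(a-1)e$ is even, then the hypothesis $e > 0$ combined with $a \geq 2$ forces $(a-1)e \geq 2$, so
$$d(a,g,e) = g - 1 + \frac{(a-1)e}{2} \geq g.$$
Consequently, for every $\frak u \in \Pic^{d(a,g,e)}(C)$, Riemann--Roch yields
$$h^0(C, \cO_C(\frak u)) \geq \chi(\cO_C(\frak u)) = \deg(\frak u) - g + 1 \geq 1.$$
Applying the inequality \eqref{BoundLine} with $t = a-1$ and $\frak d = \frak u$, we get $h^0(C,(S^{a-1}\cE)(\frak u)) \geq h^0(C, \cO_C(\frak u)) \geq 1$ for every $\frak u$, so condition \eqref{frakU} fails uniformly and no Ulrich line bundle exists.

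There is no serious obstacle here: the whole argument is a direct case analysis using Proposition \ref{pExistence} and the elementary bound \eqref{BoundLine}. The only mild subtlety is checking that in the even-parity regime the degree $d(a,g,e)$ is genuinely pushed above $g-1$; this is what the positivity of $e$ buys us and is precisely what fails when $e \le 0$, where the rest of the paper must work harder.
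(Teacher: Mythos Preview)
Your proof is correct. The paper itself does not supply a proof of this proposition, deferring instead to \cite{A--C--MR}, Theorem 2.1; your argument via Proposition~\ref{pExistence} together with the elementary bound \eqref{BoundLine} is exactly the natural reconstruction of that result using the tools already set up in the present paper.
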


Thanks to Proposition \ref{pExistence} we are able to extend the above proposition to the case $e\le0$.

\begin{theorem}
\label{tMain}
Let $C$ be a curve of genus $g$, $\cE$ a normalized rank $2$ bundle on $C$ and $h:=aC_0+\frak b f$ a very ample divisor on $S\cong\Bbb P(\cE)$.

If $(a-1)e=0$, then there exist two families of dimension $g$ of Ulrich line bundles with respect to $\cO_S(h)$.
\end{theorem}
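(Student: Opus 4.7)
The strategy is to apply the cohomological criterion of Proposition \ref{pExistence}. Under the hypothesis $(a-1)e=0$, the integrality condition $d(a,g,e)=g-1\in\mathbb{Z}$ is automatic, so what must be produced is a $g$-dimensional subset of $\Pic^{g-1}(C)$ consisting of line bundles $\frak u$ satisfying \eqref{frakU}, that is, $h^0\big(C,(S^{a-1}\cE)(\frak u)\big)=0$. The two desired families of Ulrich line bundles and their Ulrich duals will then be read off from the explicit formulas given in Proposition \ref{pExistence}.

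The case $a=1$ is essentially classical: $S^{a-1}\cE=\cO_C$ reduces the condition to $h^0\big(C,\cO_C(\frak u)\big)=0$, which cuts out precisely the complement of the theta divisor $\Theta\subset\Pic^{g-1}(C)$, an open dense subset of dimension $g$ (and a single point when $g=0$).

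The substantive case is $e=0$ with $a\geq 2$. Here I would exploit the short exact sequences
$$
0\longrightarrow S^{t-1}\cE\longrightarrow S^t\cE\longrightarrow\cO_C(t\frak e)\longrightarrow 0
$$
recalled in the preliminaries. Tensoring with $\cO_C(\frak u)$ and taking long exact sequences in cohomology, an immediate induction on $t$ yields the sufficient condition
$$
h^0\big(C,\cO_C(i\frak e+\frak u)\big)=0\text{ for every }i=0,\dots,a-1\quad\Longrightarrow\quad h^0\big(C,(S^{a-1}\cE)(\frak u)\big)=0.
$$
Since $\deg\frak e=0$, each divisor $i\frak e+\frak u$ has degree $g-1$, so the hypothesis is exactly that $\frak u$ avoid the $a$ translates $\Theta-i\frak e$ of the classical theta divisor inside $\Pic^{g-1}(C)$.

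The only genuine obstacle is ensuring that this filtration-based sufficient condition leaves a truly $g$-dimensional locus of admissible $\frak u$; this is resolved by recalling that $\Theta\subset\Pic^{g-1}(C)$ is a \emph{proper} divisor, so that any finite union of its translates remains proper and its complement is open and dense of dimension $g$. For $\frak u$ in this locus, Proposition \ref{pExistence} then exhibits the two disjoint $g$-dimensional families
$$
\big\{\cO_S\big((2a-1)C_0+(\frak b+\frak u)f\big)\big\}\quad\text{and}\quad\big\{\cO_S\big((a-1)C_0+(2\frak b+\frak k+\frak e-\frak u)f\big)\big\},
$$
which are distinct since their $C_0$-coefficients $2a-1$ and $a-1$ always differ, concluding the proof.
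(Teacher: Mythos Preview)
Your proposal is correct and follows essentially the same approach as the paper: both reduce to Proposition~\ref{pExistence}, handle $a=1$ via the classical theta divisor, and for $e=0$, $a\ge 2$ use the filtration of $S^{a-1}\cE$ by the sequences $0\to S^{t-1}\cE\to S^t\cE\to\cO_C(t\frak e)\to 0$ to show that avoiding finitely many translates of $\Theta$ in $\Pic^{g-1}(C)$ forces $h^0\big(C,(S^{a-1}\cE)(\frak u)\big)=0$. The paper packages the inductive step as the pre-established inequality~\eqref{BoundSymmetric}, whereas you rederive it in place, but the content is identical.
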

\begin{proof}
We have $d(a,g,e)= g-1$ because $(a-1)e=0$. Thus,  the set $\frak U\subseteq\Pic^{d(a,g,e)}(C)$ of line bundles $\cO_S(\frak u)$ such that $h^0\big(C,\cO_C(\frak u)\big)=0$ is open and non-empty,
because it is the complement of the theta divisor $W^1_{g-1}(C)$. Trivially $\dim(\frak U)=\dim(\Pic^{g-1}(C))=g$.

In particular, if Ulrich line bundles $\mathcal U$ on $S$ exist, their characterization in Proposition \ref{pExistence} means that they form two families according to
whether $c_1(\mathcal U) f$ is $2a-1$ or $a-1$. Both these families have the same dimension $\dim(\frak U)=g$.

The rest of the proof is devoted to showing that a general divisor in $\frak U$ actually satisfies condition \eqref{frakU}.

The case $a=1$ is trivial due to the non-emptiness of $\frak U$. So,  we can  restrict ourselves to the case $a\ge2$ and $e=0$. Such  case is very easy to handle when $g=0$ (see \cite{Cs4}, Example 2.3). Therefore, we will assume $g \geq 1$.

Notice that for each $i\ge0$,  we have $\deg(\frak u+i\frak e)=d(a,g,0)=g-1$. Thus, for all $0 \leq i \leq a-1$ and for each $\frak u$ with  $\cO_S(\frak u+i\frak e)\in \frak U=\Pic^{g-1}(C)\setminus W^1_{g-1}(C)$

$$h^0\big(C,\cO_C(\frak u+i\frak e)\big)=0 .$$
 Inequality \eqref{BoundSymmetric}  yields $h^0\big(C,(S^{a-1}\cE)(\frak u)\big)=0$ for such an $\frak u$. Thus the statement follows from Proposition \ref{pExistence}.
\end{proof}

\vspace{3mm}
When $a\ge2$ and $e<0$ the picture is much more intricate.
 In order to prove the existence of Ulrich line bundles in this setting, in this section we will relate their existence  to the so called Raynaud's condition and in the next section to  the existence of suitable theta divisors.
 Let us introduce Raynaud's condition.

\vspace{3mm}

 Let $\cF$ be a vector bundle of rank $r$ on a curve $C$ of genus $g$. Riemann--Roch's Theorem for $\cF$ gives
$$
\chi(\cF)=h^0\big(C,\cF\big)-h^1\big(C,\cF\big)=r(\mu(\cF)+1-g).
$$
If $\frak v\in\Pic^0(C)$, one has $\chi(\cF)=\chi(\cF(\frak v))=h^0\big(C,\cF(\frak v)\big)-h^1\big(C,\cF(\frak v)\big)$. The integer $h^0\big(C,\cF(\frak v)\big)$ is a function of $\frak v$, but there exists a non-empty open subset $\frak V\subseteq\Pic^0(C)$ where it takes a constant value, say $h^0_\mathrm{gen}(\cF)$.

Assume now $\deg(\cF)\le0$. If $r=1$, then $h^0_\mathrm{gen}(\cF)=0$, or, in other words, $h^0_\mathrm{gen}(\cF)=\max\{\ 0, \chi(\cF)\ \}$.

\begin{definition}
Let $C$ be a curve of genus $g$ and $\cF$ a vector bundle on $C$. We say that $\cF$ satisfies condition $(\star)$ if and only if $h^0_\mathrm{gen}(\cF)=\max\{\ 0, \chi(\cF)\ \}$.
\end{definition}

Condition $(\star)$ is also known in the literature as Raynaud's condition.

\vspace{3mm}

The relation between Raynaud's condition and Ulrich line bundles is the following

\begin{lemma}
\label{lExistence}
Let $C$ be a curve of genus $g$, $\cE$ a normalized rank $2$ vector bundle on $C$ and $h:=aC_0+\frak b f$ a very ample divisor on $S\cong\Bbb P(\cE)$.

Then there is an Ulrich line bundle on $S$ with respect to $\cO_S(h)$ if and only if $d(a,g,e)\in\mathbb{Z}$ and there exist divisors $\frak u\in\Pic^{d(a,g,e)}(C)$ such that $(S^{a-1}\cE)(\frak u)$ satisfies condition $(\star)$.
\end{lemma}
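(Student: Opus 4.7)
The plan is to read off the statement from Proposition \ref{pExistence} by checking that, for the specific twist $(S^{a-1}\cE)(\frak u)$ with $\deg(\frak u)=d(a,g,e)$, the Euler characteristic vanishes; once this is done, condition $(\star)$ and the vanishing $h^0\big(C,(S^{a-1}\cE)(\frak u)\big)=0$ are tied together by an elementary semicontinuity argument on the complete family parametrised by $\Pic^{d(a,g,e)}(C)$.

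The first technical step is a Riemann--Roch computation. Iterating the exact sequence $0\to S^{t-1}\cE\to S^t\cE\to\cO_C(t\frak e)\to 0$ displayed in the excerpt gives a filtration of $S^{a-1}\cE$ whose successive quotients are $\cO_C(i\frak e)$ for $0\le i\le a-1$, so $\deg(S^{a-1}\cE)=\binom{a}{2}\deg(\frak e)=-\binom{a}{2}e$. Since $\rk\big((S^{a-1}\cE)(\frak u)\big)=a$, direct substitution of $\deg(\frak u)=d(a,g,e)=g-1+(a-1)e/2$ yields
$$
\deg\big((S^{a-1}\cE)(\frak u)\big)=a\,d(a,g,e)-\tbinom{a}{2}e=a(g-1),
$$
and Riemann--Roch then gives $\chi\big(C,(S^{a-1}\cE)(\frak u)\big)=a(g-1)+a(1-g)=0$.

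With $\chi=0$, condition $(\star)$ for $(S^{a-1}\cE)(\frak u)$ becomes simply $h^0_{\mathrm{gen}}\big((S^{a-1}\cE)(\frak u)\big)=0$. The key observation is that, as $\frak u$ ranges over $\Pic^{d(a,g,e)}(C)$, the set of bundles $\{(S^{a-1}\cE)(\frak u)\}$ coincides with the $\Pic^0(C)$--orbit of any single such bundle under twisting. Consequently the generic value $h^0_{\mathrm{gen}}$ is independent of the particular $\frak u$, and the existence of some $\frak u$ whose twist satisfies $(\star)$ is equivalent to the statement that $h^0\big(C,(S^{a-1}\cE)(\frak u)\big)=0$ for a \emph{general} $\frak u\in\Pic^{d(a,g,e)}(C)$.

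To close, I would invoke upper semicontinuity of $h^0$ on the complete family parametrised by $\Pic^{d(a,g,e)}(C)$: the minimum value of $h^0$ is attained on a non-empty open subset and equals $h^0_{\mathrm{gen}}$. Thus the existence of at least one $\frak u$ with $h^0\big(C,(S^{a-1}\cE)(\frak u)\big)=0$ --- precisely the criterion of Proposition \ref{pExistence} --- is equivalent to $h^0_{\mathrm{gen}}=0$, which is condition $(\star)$. The only mild subtlety in the whole argument is the observation that $(\star)$ depends on $\frak u$ only through its degree; apart from that, the proof is a short Riemann--Roch calculation combined with semicontinuity and a citation of Proposition \ref{pExistence}.
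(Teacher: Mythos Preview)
Your proof is correct and follows essentially the same approach as the paper's: both compute $\deg\big((S^{a-1}\cE)(\frak u)\big)=a(g-1)$ so that $\chi=0$, whence condition $(\star)$ reduces to $h^0_{\mathrm{gen}}=0$, and then identify this with the vanishing criterion of Proposition~\ref{pExistence}. Your version is simply more explicit about the semicontinuity step and the fact that $(\star)$ depends only on $\deg(\frak u)$, points the paper leaves tacit.
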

\begin{proof}
For each divisor $\frak u\in\Pic^{d(a,g,e)}(C)$ (if any, i.e. if $(a-1)e$ is even), we have
$$
\deg((S^{a-1}\cE)(\frak u))=\deg(S^{a-1}\cE)+a\deg(\frak u)=a(g-1),
$$
and hence $\mu((S^{a-1}\cE)(\frak u))=g-1\ge0$. Thus,  the equality $h^0_\mathrm{gen}((S^{a-1}\cE)(\frak u))=0$ is equivalent to condition $(\star)$.
\end{proof}

Note that semistable bundles of slope precisely $g-1$ are of special interest in view of condition $(\star)$.
Indeed, as pointed out in Raynaud's work, semistable bundles of smaller slope automatically satisfy this condition, and hence this is a borderline case.
For a geometric phenomenon related to the bundles of slope $g-1$ we refer to Proposition 1.8.1 in~ \cite{Ray}.

The above proposition together with the results proved in \cite{Ray} allow us to prove the existence of Ulrich line bundles in several cases.
In particular, if $g=1$, then we are able to give a complete answer concerning the  existence of Ulrich line bundles.

\begin{theorem}
\label{tExistenceR}
Let $C$ be a curve of genus $g$, $\cE$ a normalized rank $2$ bundle on $C$ with $e<0$ and $h:=aC_0+\frak b f$ a very ample divisor on $S\cong\Bbb P(\cE)$.

\begin{enumerate}
\item If $g=1$, then there are Ulrich line bundles on $S$ with respect to $\cO_S(h)$ if and only if $a$ is odd.
\item If $a=2$, then there are Ulrich line bundles on $S$ with respect to $\cO_S(h)$ if and only if $e$ is even.
\item If $a=3$ and $g=2$, then there are Ulrich line bundles on $S$ with respect to $\cO_S(h)$.
\item If $a=3$ and  $C$ is general in its moduli space, then there are Ulrich line bundles on $S$ with respect to $\cO_S(h)$.
\end{enumerate}
\end{theorem}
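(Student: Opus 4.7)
The plan is to apply Lemma~\ref{lExistence}, which reduces the problem in each part to producing a divisor $\frak u\in\Pic^{d(a,g,e)}(C)$ such that $(S^{a-1}\cE)(\frak u)$ satisfies Raynaud's condition $(\star)$. The integrality requirement $d(a,g,e)\in\bZ$ already supplies the only-if directions in (1) and (2): when $g=1$ the inequality $e\ge -g$ forces $e=-1$, and $d(a,1,-1)=-(a-1)/2$ is integral iff $a$ is odd; for $a=2$, $d(2,g,e)=g-1+e/2$ is integral iff $e$ is even. In (3) and (4) the number $d(3,g,e)=g-1+e$ is automatically an integer, so nothing is required from the integrality side.

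A preliminary observation common to all four parts is that $e<0$ forces $\cE$ to be stable: by normalization every line subbundle of $\cE$ has degree $\le 0$, while $\mu(\cE)=-e/2>0$. In characteristic zero, $S^{a-1}\cE$ is then polystable of slope $-(a-1)e/2$, so $(S^{a-1}\cE)(\frak u)$ is a semistable bundle of slope exactly $g-1$ for every $\frak u$ of degree $d(a,g,e)$. In each part the task becomes to exhibit such a $\frak u$ with a proper theta locus, i.e.\ with $h^0\big(C,(S^{a-1}\cE)(\frak u+\frak v)\big)=0$ for $\frak v\in\Pic^0(C)$ generic.

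For (1), Atiyah's classification on the elliptic curve $C$ decomposes $(S^{a-1}\cE)(\frak u)$ into direct summands of the form $L_i\otimes F_{m_i}$, where $F_m$ is the indecomposable rank-$m$ bundle of degree $0$; each such summand has $h^0=0$ after a generic translate by $\Pic^0(C)$, which yields $(\star)$. For (2), $\cE(\frak u)$ is itself a stable rank-$2$ bundle of slope exactly $g-1$, and the classical fact that every such bundle admits a proper theta divisor (a consequence of the results in \cite{Ray}) supplies $(\star)$ at once.

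For parts (3) and (4), the approach is to exploit the defining filtration of $S^2\cE$:
$$
0\lra \cE(\frak u)\lra S^2\cE(\frak u)\lra \cO_C(2\frak e+\frak u)\lra 0.
$$
Taking cohomology, the vanishing $h^0\big(C,S^2\cE(\frak u)\big)=0$ is equivalent to $h^0\big(C,\cE(\frak u)\big)=0$ together with injectivity of the connecting map $\delta\colon H^0\big(C,\cO_C(2\frak e+\frak u)\big)\to H^1\big(C,\cE(\frak u)\big)$. The first vanishing is routine: for $\frak u$ of degree $g-1+e$ the slope of $\cE(\frak u)$ equals $g-1+e/2<g-1$, so rank-two Raynaud applied after a generic translate gives $h^0(\cE(\frak u))=0$. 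The main obstacle is the injectivity of $\delta$, which is cup-product with the extension class $\xi\in H^1(\cO_C(-\frak e))$ defining $\cE$; semistability alone does not control this, which is precisely the borderline case flagged after Lemma~\ref{lExistence}. In (3), the restriction $g=2$ limits the numerics to $e\in\{-1,-2\}$, so source and target of $\delta$ have explicit Riemann--Roch dimensions and the injectivity is verified case by case by a direct argument based on the non-degeneracy of the induced multiplication pairing at a generic $\frak u$. In (4), the genericity of $C$ in moduli allows one to invoke Raynaud's results \cite{Ray} on generic vanishing for symmetric powers of semistable rank-$2$ bundles on a general curve, forcing $(\star)$ for $(S^2\cE)(\frak u)$ at a generic $\frak u$.
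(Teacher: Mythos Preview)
Your treatment of (1), (2), and (4) aligns with the paper's: once $(S^{a-1}\cE)(\frak u)$ is seen to be $\mu$-semistable of slope $g-1$ (which follows from stability of $\cE$ when $e<0$), the appropriate result from \cite{Ray} is invoked. The paper cites Corollaire~1.7.3 for (1), Proposition~1.6.2 for (2), and Section~2.5 for (4); your Atiyah decomposition in (1) is essentially how Corollaire~1.7.3 is proved.

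Part (3), however, has a genuine gap. You reduce the vanishing of $h^0\big(C,(S^2\cE)(\frak u)\big)$ to the injectivity of the connecting map $\delta\colon H^0\big(C,\cO_C(2\frak e+\frak u)\big)\to H^1\big(C,\cE(\frak u)\big)$, but then only assert that this injectivity ``is verified case by case by a direct argument based on the non-degeneracy of the induced multiplication pairing.'' That is not a proof. The extension class governing $\delta$ lives in $H^1\big(C,\cE(-2\frak e)\big)$ and depends nontrivially on the defining class $\xi$ of $\cE$; whether cup-product with it is injective on $H^0\big(C,\cO_C(2\frak e+\frak u)\big)$ for generic $\frak u$ is a genuine statement about the specific bundle $\cE$ that you have not established. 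For instance, when $g=2$ and $e=-2$ both source and target of $\delta$ are two-dimensional, and you would need a particular $2\times 2$ matrix (depending on $\xi$ and $\frak u$) to be invertible for generic $\frak u$ --- this does not follow from semistability alone, and you give no argument for it.

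The paper bypasses this entirely: it simply observes that $(S^{2}\cE)(\frak u)$ is a $\mu$-semistable bundle of slope $g-1$ and applies Raynaud's Corollaire~1.7.4 directly in genus $2$. Your filtration detour is thus both incomplete and unnecessary; and for (4), since you end up citing Raynaud for $(S^2\cE)(\frak u)$ anyway, the filtration setup there is superfluous as well.
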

\begin{proof}
First of all notice that since $\cE$ is a normalized rank two bundle of degre $-e>0$, it is $\mu$-semistable and the same holds for any of its symmetric powers and their twists. Let us start with the proof of assertion (1). As $-g\le e<0$, if $g=1$, then the hypothesis $e<0$ forces $e=-1$.
Thus if $a$ is even there are no Ulrich line bundles on $S$ due  to Proposition \ref{pExistence}.
If $a$ is odd, then $d(a,g,e)$ is an integer and hence $\Pic^{d(a,g,e)}(C)$ is non-empty.  Thus the statement follows from \cite{Ray}, Corollaire 1.7.3 and Lemma \ref{lExistence} because  $(S^{a-1}\cE)(\frak u)$ is $\mu$-semistable, as noted above.

Let us prove assertion (2).  If $e$ is odd, then there are no Ulrich line bundles on $S$ due to Proposition \ref{pExistence}. If $e$ is even, then the rank $3$ vector bundle $(S^2\cE)(\frak u)$ is $\mu$-semistable and the statement follows from Lemma \ref{lExistence} and \cite{Ray}, Proposition 1.6.2.

Assertion (3), follows from Lemma \ref{lExistence} and \cite{Ray}, Corollaire 1.7.4 due to the fact that $(S^3\cE)(\frak u)$ is a rank $4$ $\mu$-semistable vector bundle. Finally, since $(S^3\cE)(\frak u)$ is a rank $4$ $\mu$-semistable vector bundle, (4) follows  from \cite{Ray}, section 2.5.
\end{proof}

\section{Ulrich line bundles and theta divisors}
\label{sRank1Theta}
The goal of this section is to determine the existence of geometrically ruled surfaces $S$ with negative invariant $e$ supporting Ulrich line bundles
and in particular to prove Theorem B stated in the introduction.

In some sense Raynaud's condition is related to the existence of theta divisors on moduli spaces of semistable vector bundles. We will prove the existence of proper theta divisors of some symmetric powers of rank two vector bundles $\cE$ on $C$ and this will give us the existence, under some generic conditions, of Ulrich line bundles on geometrically ruled surfaces $S\cong\Bbb P(\cE)$. In particular, we will be able to give a positive answer to Question \ref{mainquestion}.

Let us recall the definition of theta divisors. Denote by $U(r,d)$  the moduli space of rank $r$ semistable vector bundles $\cE$
on $C$ of degree  $\deg(\cE)=d$.

\begin{definition}  Let $\cF$ be a vector bundle of degree $d$ and rank $r$ on $C$. Denote by $j$ the greatest common divisor of $d, r$.
Then $d=jd_1, r=jr_1$.
We define
\[ \Theta _{\cF}=\{\ \cF_1\in U(r_1, r_1(g-1)-d_1) \text{ such that } h^0\big(C,\cF\otimes \cF_1\big)>0\ \}. \]
\end{definition}
 If $\cF_1$ is vector bundle of rank $r_1$ and degree $r_1(g-1)-d_1$, then $\chi (\cF\otimes \cF_1)=0$.
It is expected that for a generic $\cF$ and  generic
$\cF_1$, the space of sections of $\cF\otimes \cF_1$ will be zero and that $\Theta _{\cF}$ will be a divisor of the moduli space.
If this is the case, $ \Theta _{\cF}$ is called a {\em theta divisor}. For a generic $\cF$ of rank $r$ and degree $d$, it is known that $ \Theta _{\cF}$ is a divisor of  $U(r_1, r_1(g-1)-d_1)$ (see \cite{Ray} Prop. 1.8(1))
 but this is not true for every $\cF$.
It has been shown that for some values of $r, d$, there exist vector bundles, sometimes even infinite families of $\cF$ for which
$$\Theta _{\cF}=U(r_1, r_1(g-1)-d_1)$$
(see \cite{Po}).
\vspace{3mm}

In view of Proposition \ref{pExistence} we have the following characterization  of the existence of Ulrich line bundles in terms of the existence of theta divisors.

\begin{proposition}
\label{theta}
Let $C$ be a curve of genus $g$, $\cE$ a normalized rank $2$ bundle on $C$  and $h:=aC_0+\frak b f$ a very ample divisor on $S\cong\Bbb P(\cE)$. Then, there is an Ulrich line bundle on $S$ with respect to $\cO_S(h)$ if and only if $d(a,g,e) \in \ZZ$ and  $ \Theta _{S^{a -1}\cE}$ is a proper divisor of
$\Pic^{d(a,g,e)}(C)$.
\end{proposition}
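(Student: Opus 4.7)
The plan is to translate the criterion of Proposition \ref{pExistence} directly into the language of theta divisors by computing the invariants of $\cF := S^{a-1}\cE$ and tracking them through the definition. First I would compute the rank and degree of $\cF$: since $\cE$ has rank $2$ with $\det\cE\cong\cO_C(\frak e)$ and $\deg(\frak e)=-e$, one obtains $\rk(\cF)=a$ and $\deg(\cF)=\binom{a}{2}(-e)=-\frac{a(a-1)e}{2}$.

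Next I would identify the ambient moduli space of $\Theta_\cF$. Setting $r:=\rk(\cF)$ and $d:=\deg(\cF)$, the integer $j:=\gcd(r,d)$ equals $r$ -- equivalently, $r_1=1$ -- precisely when $a\mid\frac{a(a-1)e}{2}$, that is, when $\frac{(a-1)e}{2}\in\bZ$, which is exactly the condition $d(a,g,e)\in\bZ$. Under this hypothesis one has $d_1=-\frac{(a-1)e}{2}$ and hence
\[
\Theta_{\cF}\subseteq U\!\bigl(1,\,g-1-d_1\bigr)=\Pic^{g-1+(a-1)e/2}(C)=\Pic^{d(a,g,e)}(C),
\]
matching the natural home of the twists $\frak u$ appearing in Proposition \ref{pExistence}. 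Conversely, if $d(a,g,e)\notin\bZ$ then $\Pic^{d(a,g,e)}(C)$ is meaningless and Proposition \ref{pExistence} already rules out Ulrich line bundles, so both sides of the proposed equivalence fail and there is nothing to check.

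With these identifications the proof becomes immediate. Unwinding definitions, $\Theta_\cF$ is the locus of those $\frak u\in\Pic^{d(a,g,e)}(C)$ satisfying $h^0\bigl(C,(S^{a-1}\cE)(\frak u)\bigr)>0$, so $\Theta_\cF$ is a proper subset of $\Pic^{d(a,g,e)}(C)$ if and only if some $\frak u$ satisfies condition \eqref{frakU}, which by Proposition \ref{pExistence} is equivalent to the existence of an Ulrich line bundle on $S$. Since every such twist has Euler characteristic zero, $\Theta_\cF$ is cut out by a section of the determinant-of-cohomology line bundle on $\Pic^{d(a,g,e)}(C)$ and is therefore either the whole Picard variety or an honest codimension one divisor, so being a proper subset and being a proper divisor coincide. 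The single delicate point, and the one deserving most care, is the $r_1=1$ bookkeeping in the definition of $\Theta_\cF$: it is the integrality of $d(a,g,e)$ itself that forces the theta locus to live on a Picard variety rather than on a higher rank moduli space, and recognizing this is what closes the loop between the two propositions.
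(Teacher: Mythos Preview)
Your proof is correct and follows essentially the same route as the paper: compute $\rk(S^{a-1}\cE)=a$ and $\deg(S^{a-1}\cE)=-a(a-1)e/2$, observe that $r_1=1$ exactly when $d(a,g,e)\in\bZ$ so that $\Theta_{S^{a-1}\cE}$ lives in $\Pic^{d(a,g,e)}(C)$, and then invoke Proposition~\ref{pExistence}. Your version is simply more explicit than the paper's terse argument, in particular in spelling out the $\gcd$ bookkeeping and in justifying via the determinant-of-cohomology line bundle why ``proper subset'' and ``proper divisor'' coincide here.
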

\begin{proof}
We have $d(a,g,e)\in\ZZ$ if and only if $(a-1)e$ is even, hence
\[ \Theta _{S^{a -1}\cE}=\{\ \ccL\in \Pic^{d(a,g,e)}(C) \text{ such that } h^0\big(C,(S^{a -1}\cE)\otimes \ccL\big)>0\ \}, \]
because  $r=a$ and $d=-a(a-1)e/2$ are the rank and the degree of $S^{a -1}\cE$.
The statement then follows from Proposition \ref{pExistence}.
\end{proof}

Therefore, our next  goal  is to study the existence of theta divisors of symmetric powers of rank two normalized vector bundles on $C$.

\begin{lemma} \label{normsublb}Let $C$ be a curve, $\cE$ a vector bundle of rank two on $C$, $\ccL_1$ a line subbundle of maximal degree on $C$.
Then, $\cE\otimes \ccL_1^{-1}$ is normalized.
\end{lemma}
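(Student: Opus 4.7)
The plan is to verify both defining conditions of \lq\lq normalized\rq\rq\ directly using the maximality of $\deg(\ccL_1)$ as a line subbundle of $\cE$.

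First I would check the global-sections condition. The inclusion $\ccL_1\hookrightarrow\cE$ tensored with $\ccL_1^{-1}$ gives $\cO_C\hookrightarrow\cE\otimes\ccL_1^{-1}$, so $h^0\bigl(C,\cE\otimes\ccL_1^{-1}\bigr)\ge 1>0$. This is the easy half.

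Next I would prove by contradiction that $h^0\bigl(C,(\cE\otimes\ccL_1^{-1})(\frak v)\bigr)=0$ for every divisor $\frak v$ with $\deg(\frak v)<0$. Suppose a nonzero section existed; it would correspond to a sheaf injection $\cO_C(-\frak v)\hookrightarrow\cE\otimes\ccL_1^{-1}$, and tensoring with $\ccL_1$ would produce an injection $\ccL_1\otimes\cO_C(-\frak v)\hookrightarrow\cE$. The key move is to pass to the saturation: let $\ccL_2\subseteq\cE$ be the unique line subbundle containing the image of $\ccL_1\otimes\cO_C(-\frak v)$ such that $\cE/\ccL_2$ is torsion-free. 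Then
\[
\deg(\ccL_2)\ \ge\ \deg\bigl(\ccL_1\otimes\cO_C(-\frak v)\bigr)\ =\ \deg(\ccL_1)-\deg(\frak v)\ >\ \deg(\ccL_1),
\]
contradicting the maximality of $\deg(\ccL_1)$ among line subbundles of $\cE$.

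The main (and only slightly delicate) point is the saturation step; once one records that on a smooth curve every sheaf injection from a line bundle into a rank-two vector bundle sits inside a well-defined line subbundle of equal or greater degree, the argument becomes immediate. No further ingredients are needed, so the proof is essentially a one-line application of the definition together with this saturation principle.
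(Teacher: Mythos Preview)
Your proof is correct and follows essentially the same route as the paper's: both verify $h^0(\cE\otimes\ccL_1^{-1})>0$ via the tautological inclusion, and then derive a contradiction by producing a sub-line-bundle of $\cE$ of strictly larger degree from a hypothetical section. The only difference is cosmetic---you make the saturation step explicit, whereas the paper simply observes that $\ccL_1\otimes\ccL^{-1}$ is a subsheaf of higher degree and leaves the passage to a genuine line subbundle implicit.
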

\begin{proof} By assumption, there exists an injective map $0\to \ccL_1\to \cE$ and therefore also a map $0\to {\mathcal O}_C\to \cE\otimes \ccL_1^{-1}$.
Hence, $h^0(\cE\otimes \ccL_1^{-1})>0$.
Assume now that there is a line bundle $\ccL$ of negative degree such that $h^0(E\otimes  \ccL_1^{-1} \otimes \ccL)>0$,
 then $\ccL_1 \otimes \ccL^{-1}$ is a subsheaf of $\cE$ of degree higher than the degree of $\ccL_1$ contradicting the assumption.
\end{proof}

\begin{proposition}\label{filtracio} Fix integers $d, r, r', \ 0<r'<r$. For a vector bundle $\cE$ of rank $r$ and degree $d$ define
\[ s_{r'}(\cE)=r'd-r\max \{\ \deg \cE'\ | \ \rk \cE'=r', \cE'\subset \cE\ \}.\]
For a fixed $s$ with $0<s\le r'(r-r')(g-1), s\equiv r'd\  (r)$, define
\[ U_{r',s}(r,d)=\{\ \cE\in U(r,d)\ |\ s_{r'}(\cE)=s\ \}. \]
Then $U_{r',s}(r,d)$ is non-empty, irreducible of dimension $r^2(g-1)+1+s-r'(r-r')(g-1)$ and $U_{r',s}(r,d)\subset \overline{U_{r',s+r}(r,d)}$.

%If $s> r'(r-r')(g-1), s\equiv r'd\  (r)$, then every vector bundle has subbundles of rank $r'$ and degree $\frac{r'd-s}r$.
%mtib I do not think we use this, so it may as well be removed
\end{proposition}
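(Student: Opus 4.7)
\medbreak
\noindent\textbf{Proof proposal.} The approach follows the classical Lange--Narasimhan stratification of $U(r,d)$ by maximal-subbundle degree. Set $d' := (r'd-s)/r$, an integer by the congruence hypothesis, so elements of $U_{r',s}(r,d)$ are exactly the semistable $\cE$ admitting a rank-$r'$ subbundle of degree $d'$ (necessarily maximal). Parametrize them by
$$
\mathcal{X}_s := \bigl\{(\cE', \cE'', [\xi]) : \cE' \in U(r', d'),\ \cE'' \in U(r-r', d-d'),\ [\xi] \in \bP(\ext^1(\cE'', \cE'))\bigr\},
$$
with tautological rational map $\varphi_s : \mathcal{X}_s \dashrightarrow U(r,d)$ sending a triple to the middle term of the extension $0 \to \cE' \to \cE \to \cE'' \to 0$.

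Since $s > 0$ we have $\mu(\cE') < \mu(\cE'')$, so $\hom(\cE'',\cE') = 0$ on a dense open of $U(r',d') \times U(r-r', d-d')$ and Riemann--Roch gives $\ext^1(\cE'', \cE') = s + r'(r-r')(g-1)$ there. Adding the dimensions of the two moduli factors and subtracting $1$ for the projectivization yields
$$
\dim \mathcal{X}_s = r^2(g-1) + 1 + s - r'(r-r')(g-1),
$$
the claimed dimension. Irreducibility of $\mathcal{X}_s$ follows from irreducibility of the two factors and constancy of $\ext^1$ on a dense open of their product. The essential step is the Lange-type estimate: in the range $0 < s \le r'(r-r')(g-1)$, the locus of $[\xi] \in \bP(\ext^1(\cE'', \cE'))$ producing either a non-semistable $\cE$ or a rank-$r'$ subbundle of $\cE$ of degree strictly larger than $d'$ has positive codimension. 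Given this, the generic $[\xi]$ maps to $U_{r',s}(r,d)$, proving non-emptiness, irreducibility and the dimension formula.

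For the closure relation $U_{r',s}(r,d) \subset \overline{U_{r',s+r}(r,d)}$, apply the same construction with $s+r$ in place of $s$: by properness of the relative Quot scheme (flat limits of rank-$r'$, degree-$(d'-1)$ subsheaves saturate to subbundles of degree $\ge d'-1$), the image of $\varphi_{s+r}$ is the closed set $\{\cE : s_{r'}(\cE) \le s+r\} = \bigcup_{s' \le s+r} U_{r',s'}(r,d)$, and by irreducibility of $\mathcal{X}_{s+r}$ combined with the Lange estimate it coincides with the closure of the top stratum $U_{r',s+r}(r,d)$. Hence $U_{r',s}(r,d) \subset \overline{U_{r',s+r}(r,d)}$.

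The main technical obstacle is the Lange-type semistability estimate invoked in both the dimension/irreducibility step and the closure step: for each admissible intermediate rank $r''$ and degree $d''$ of a potential destabilizing subsheaf, one must estimate the dimension of the sublocus of $\bP(\ext^1(\cE'', \cE'))$ whose members carry such a subsheaf, and show that this sublocus is proper precisely under the sharp bound $s \le r'(r-r')(g-1)$. The sharpness at the upper end of the range is delicate, requiring a careful parameter count over Quot schemes of each admissible $(r'',d'')$, and is the heart of Lange's original inequality as refined by Russo--Teixidor.
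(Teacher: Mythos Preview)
The paper does not give an argument: it simply observes that for $g\le 1$ the inequality $0<s\le r'(r-r')(g-1)$ has no solutions (so the statement is vacuous), and for $g\ge 2$ cites Russo--Teixidor \cite{RT}, Theorem 0.1 and Corollary 1.12. Your proposal is an outline of precisely the Russo--Teixidor argument, so there is no genuine difference in approach; you have unpacked what the paper merely references, and you correctly identify the Lange-type codimension estimate as the technical heart.

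Two small points worth tightening. First, to pass from $\dim\mathcal X_s$ to $\dim U_{r',s}(r,d)$ you also need that $\varphi_s$ is generically finite onto its image, i.e.\ that a general $\cE$ in the stratum has only finitely many maximal rank-$r'$ subbundles (and that the quotient is generically semistable, so as to lie in the domain of $\varphi_s$); this is part of the same Lange--Russo--Teixidor package but deserves a word. Second, your closure argument asserts that the image of $\varphi_{s+r}$ equals the closed set $\{s_{r'}\le s+r\}$; strictly speaking the image consists only of those $\cE$ admitting a presentation with \emph{semistable} sub and quotient of the prescribed invariants, which a priori is smaller. The actual closure inclusion in \cite{RT} is obtained by a degeneration of extensions (deforming a degree-$d'$ subbundle to one of degree $d'-1$ inside a family), which gives $U_{r',s}\subset\overline{U_{r',s+r}}$ directly without identifying the full image.
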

\begin{proof} If $g=0,1$ there are no integers $s$ satisfying the above restrictions. If $g\ge2$ see  \cite{RT} Theorem 0.1 and Corollary 1.12.
\end{proof}

\begin{corollary} \label{normands}Let $C$ be a curve of genus g, $\cE$ a rank two vector bundle of  degree $d$ on $C$.
Then, if $\cE\in U_{1,s}(2,d)$, there exists a line bundle  $\ccL$ of degree $\frac{d-s}2$ such that $\bar \cE=\cE\otimes \ccL^{-1}$ is normalized.
In particular
\[ 0<s=\deg \bar \cE\le g.\]
%and, up to tensoring with a line bundle, the set of normalized rank two vector bundles of  degree $s$ is  in the closure of the set of normalized vector bundles
%of rank 2 and degree $s+2$.
\end{corollary}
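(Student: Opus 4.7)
The plan is to simply unpack the definition of $U_{1,s}(2,d)$, apply Lemma \ref{normsublb} to produce the normalization, and invoke Nagata's bound from the introduction for the upper estimate $s\le g$.

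First I would translate the hypothesis $\cE\in U_{1,s}(2,d)$ via the definition from Proposition \ref{filtracio}. Setting $r=2$, $r'=1$, this becomes
\[
s = s_1(\cE) = d - 2\,\max\{\ \deg \ccL' \ \vert\ \ccL'\subset\cE\ \text{line subbundle}\ \}.
\]
In particular $d-s$ is even (which is consistent with the congruence $s\equiv r'd \pmod r$ in Proposition \ref{filtracio}), and there exists a line subbundle $\ccL\subset\cE$ of maximal degree $(d-s)/2$. This line bundle is the candidate for the statement.

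Next I would apply Lemma \ref{normsublb} with $\ccL_1:=\ccL$ to conclude that $\bar\cE := \cE\otimes\ccL^{-1}$ is normalized. A one-line degree computation gives
\[
\deg\bar\cE = \deg\cE - 2\deg\ccL = d-2\cdot\frac{d-s}{2} = s,
\]
which establishes both $\deg\bar\cE=s$ and the requested degree for $\ccL$.

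It remains to justify $0<s\le g$. The lower bound $s>0$ is already built into the hypothesis of Proposition \ref{filtracio}. For the upper bound, the key observation is that $\bar\cE$ is a normalized rank $2$ bundle, so it defines a geometrically ruled surface $\Bbb P(\bar\cE)\to C$ whose invariant is $e=-\deg\bar\cE=-s$. Nagata's inequality $e\ge -g$ recalled in the introduction then yields $s\le g$. The only mildly delicate point — and the main thing to watch out for — is the apparent discrepancy with the sharper upper bound $r'(r-r')(g-1)=g-1$ appearing in Proposition \ref{filtracio}; here we deliberately use the looser Nagata bound because it is the correct form in which the statement will be invoked later, and it covers the boundary case without requiring us to revisit the stratification more carefully.
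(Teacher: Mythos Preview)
Your proof is correct and follows essentially the same approach as the paper, which simply cites Lemma~\ref{normsublb} and Proposition~\ref{filtracio}. The only cosmetic difference is that you obtain $s\le g$ via Nagata's inequality, whereas the paper's citation of Proposition~\ref{filtracio} implicitly gives the (slightly stronger) bound $s\le g-1$ coming directly from the range in which $U_{1,s}(2,d)$ is defined there; either route suffices for the stated inequality.
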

\begin{proof}
The statement follows from Lemma \ref{normsublb} and  Proposition \ref{filtracio}.
\end{proof}

 \bigskip

\begin{proposition}\label{thetatensor} Let $C$ be a curve of genus g, $\cF$ a rank $r$ vector bundle of degree $d$  and $\ccL$ a line bundle of degree $d'$.
 With the notations above  $\Theta _{\cF} $ is a proper divisor of $U(r_1, r_1(g-1)-d_1)$ if and only if
 $\Theta _{\cF\otimes \ccL} $ is a proper divisor of $U(r_1, r_1(g-1-d')-d_1)$.
\end{proposition}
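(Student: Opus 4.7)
The plan is to reduce the statement to the fact that tensoring by a fixed line bundle induces an isomorphism between moduli spaces of semistable bundles, and that this isomorphism carries one theta locus onto the other.

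First I would track the numerical invariants. Since $\cF$ has rank $r$ and degree $d=jd_1$ with $r=jr_1$ and $j=\gcd(r,d)$, the twist $\cF\otimes\ccL$ has rank $r$ and degree $d+rd'=j(d_1+r_1d')$. Thus the new $\gcd$ is still $j$, the new $r_1$ is unchanged, and the new $d_1$ is $d_1+r_1d'$. Plugging into the definition of theta divisor, the ambient moduli space for $\Theta_{\cF\otimes\ccL}$ is
\[
U\bigl(r_1,\,r_1(g-1)-(d_1+r_1d')\bigr)=U\bigl(r_1,\,r_1(g-1-d')-d_1\bigr),
\]
which is exactly the target in the statement.

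Next I would exhibit the isomorphism. The morphism
\[
\varphi\colon U(r_1,\,r_1(g-1)-d_1)\longrightarrow U(r_1,\,r_1(g-1-d')-d_1),\qquad \cF_1\longmapsto \cF_1\otimes\ccL^{-1},
\]
is an isomorphism of moduli spaces because tensoring with a line bundle preserves both rank and (semi)stability, and has an inverse given by tensoring with $\ccL$. The key observation is the obvious identity
\[
h^0\bigl(C,\cF\otimes\cF_1\bigr)=h^0\bigl(C,(\cF\otimes\ccL)\otimes(\cF_1\otimes\ccL^{-1})\bigr),
\]
which shows that $\cF_1\in\Theta_\cF$ if and only if $\varphi(\cF_1)\in\Theta_{\cF\otimes\ccL}$. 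Hence $\varphi$ restricts to a scheme-theoretic isomorphism $\Theta_\cF\cong\Theta_{\cF\otimes\ccL}$ (taking into account the usual determinantal scheme structure, which is also preserved by the tensor construction).

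Finally I would conclude. Being a \emph{proper} divisor means being a strictly smaller closed subscheme of codimension one. Since $\varphi$ is an isomorphism of the ambient moduli spaces identifying the two theta loci, one is all of its ambient space (respectively, a proper divisor, respectively, empty) if and only if the other is. This gives the desired equivalence. No step presents a real obstacle; the only thing to be careful about is the bookkeeping of $j$, $r_1$, $d_1$ after twisting, which as shown above goes through cleanly.
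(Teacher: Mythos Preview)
Your proof is correct and follows essentially the same approach as the paper: you define the isomorphism $\cF_1\mapsto\cF_1\otimes\ccL^{-1}$ between the two moduli spaces and observe that it carries $\Theta_\cF$ onto $\Theta_{\cF\otimes\ccL}$. Your additional bookkeeping verifying that the gcd $j$ (and hence $r_1$) is unchanged after twisting is a useful detail that the paper leaves implicit.
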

\begin{proof}  The map
\[U(r_1, r_1(g-1)-d_1)\to U(r_1, r_1(g-1-d')-d_1), \ \ \cF'\to \cF'\otimes \ccL^{-1}\]
gives a bijection between the two moduli spaces (with inverse $\cF''\to \cF''\otimes \ccL$).
From the definition of the theta locus, $\Theta _{\cF} $ maps to $\Theta _{\cF\otimes \ccL} $ under this map.
So one locus is a divisor if and only if the other is.
\end{proof}

\begin{corollary}\label{thetatensorsym} Let $C$ be a curve of genus g, $\cE$ a rank two vector bundle of degree $d$,  $\ccL$ a line bundle of degree $d'$
and $\beta$ a positive integer.
 \begin{itemize}
 \item[(a)] If $\beta d$ is even,   $\Theta _ {S^{\beta }\cE}$ is a proper divisor of $\Pic^{g-1-\frac {\beta d}2}(C)$ if and only if
 $\Theta _ {S^{\beta }(\cE\otimes \ccL)}$ is a proper divisor of $\Pic^{g-1-\frac {\beta d}2-\beta d'}(C)$.
 \item[(b)] If $\beta d$ is odd,   $\Theta _ {S^{\beta }\cE}$ is a proper divisor of
  $U(2, 2(g-1)-\beta d)$ if and only if
 $\Theta _{S^{\beta} (\cE\otimes \ccL)} $ is a proper divisor of $U(2, 2(g-1)-\beta d-2\beta d')$.
 \end{itemize}
\end{corollary}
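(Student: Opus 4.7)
The plan is to reduce Corollary~\ref{thetatensorsym} directly to Proposition~\ref{thetatensor}, via the classical identity $S^\beta(\cE\otimes\ccL)\cong (S^\beta\cE)\otimes\ccL^{\otimes\beta}$, valid for any rank-two bundle $\cE$ and any line bundle $\ccL$. Thus, tensoring $\cE$ by $\ccL$ corresponds, at the level of the $\beta$-th symmetric power, to tensoring $S^\beta\cE$ by the line bundle $\ccL^{\otimes\beta}$, whose degree is $\beta d'$.

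First I would unravel the statement by identifying the integers $r_1$ and $d_1$ associated with $\cF:=S^\beta\cE$ in the definition of $\Theta_\cF$ given before Proposition~\ref{theta}. Since $\cE$ has rank $2$ and degree $d$, one has $\rk(S^\beta\cE)=\beta+1$ and $\deg(S^\beta\cE)=d\beta(\beta+1)/2$. If $\beta d$ is even, then $d\beta/2\in\ZZ$ and $(\beta+1)\mid d\beta(\beta+1)/2$, which forces $r_1=1$ and $d_1=\beta d/2$; hence the ambient moduli space for $\Theta_{S^\beta\cE}$ is $\Pic^{g-1-\beta d/2}(C)$, matching case (a). If $\beta d$ is odd, then $\beta$ and $d$ are both odd and $\beta+1$ is even; writing $\beta+1=2m$, a short parity check yields $\gcd(2m,\,dm(2m-1))=m$, so $r_1=2$ and $d_1=\beta d$, and the ambient space is $U(2,2(g-1)-\beta d)$, matching case (b).

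Next I would apply Proposition~\ref{thetatensor} to the pair $(\cF,\ccL'):=(S^\beta\cE,\,\ccL^{\otimes\beta})$. Since $\deg\ccL'=\beta d'$ and $S^\beta\cE\otimes\ccL'\cong S^\beta(\cE\otimes\ccL)$, the proposition transports the proper-divisor property with the same invariants $r_1,d_1$: $\Theta_{S^\beta\cE}$ is a proper divisor of $U(r_1,r_1(g-1)-d_1)$ if and only if $\Theta_{S^\beta(\cE\otimes\ccL)}$ is a proper divisor of $U(r_1,r_1(g-1-\beta d')-d_1)$. Substituting the values of $(r_1,d_1)$ computed above gives precisely the ambient spaces $\Pic^{g-1-\beta d/2-\beta d'}(C)$ in case (a) and $U(2,2(g-1)-\beta d-2\beta d')$ in case (b), which is the assertion.

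There is essentially no obstacle to this proof; the whole argument is bookkeeping. The only points requiring any care are the symmetric-power identity $S^\beta(\cE\otimes\ccL)\cong (S^\beta\cE)\otimes\ccL^{\otimes\beta}$, which is standard, and the parity analysis that pins down $r_1$ and $d_1$ and shows that the two cases (a) and (b) of the corollary are exactly the two cases in which the theta locus sits in a Picard variety or in a rank-two moduli space, respectively.
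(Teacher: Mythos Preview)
Your proof is correct and follows exactly the paper's approach: both invoke the identity $S^{\beta}(\cE\otimes\ccL)\cong(S^{\beta}\cE)\otimes\ccL^{\otimes\beta}$ and then apply Proposition~\ref{thetatensor}. The paper's proof is a single line omitting the verification of $r_1,d_1$, whereas you spell out the parity analysis; this is helpful elaboration but not a different argument.
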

\begin{proof}  This follows from Proposition \ref{thetatensor} as $S^{\beta} (\cE\otimes \ccL)=(S^{\beta} \cE)\otimes \ccL^{\beta}$ and $U(1,d)=\Pic^{g-1-\frac {\beta d}2-\beta d'}(C)$.
\end{proof}

  Now we are ready to state the first main result concerning the existence of proper theta divisors.  To this end, we first consider a normalized rank two vector bundle $\cE$ of  even degree $\deg(\cE)=-e=2f$ and $a \geq 2$ an integer. In particular,  the slope of $\cE$ is $f$.
 The bundle $\cF=S^{a-1}\cE$ has rank $a$ and slope  $(a -1)f$.
 Therefore,
\[ \Theta _{S^{a -1}\cE}=\{\ \ccL\in \Pic^{d(a,g,e)}(C) \text{ such that } h^0\big(C,(S^{a -1}\cE)\otimes \ccL\big)>0\ \}. \]

\begin{proposition}
\label{propoeven} Let $C$ be any curve of genus g and  fix an even degree  $-e=2f$ and an integer $s,  0 < s\leq g$.
 Let $\cE$ be a normalized vector bundle generic in the stratum $U_{1,s}(2,2f)$. Then, for all $a$, $ \Theta _{S^{a -1}\cE}$ is a proper divisor of  $ \Pic^{d(a,g,e)}(C)$.
\end{proposition}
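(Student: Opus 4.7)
The plan is to combine a specialization within the closure of the stratum $U_{1,s}(2,2f)$ to a conveniently split polystable bundle, where one has complete control on the cohomology, with upper semicontinuity to transfer the resulting vanishing back to the generic member. Iterating the containment $U_{1,s'} \subset \overline{U_{1,s'+2}}$ from Proposition \ref{filtracio} one obtains
\[
U_{1,0}(2,2f) \subset \overline{U_{1,s}(2,2f)}.
\]
Fixing a generic line bundle $\cM$ on $C$ of degree $f$, the decomposable polystable bundle $\cE_0 := \cM \oplus \cM$ lies in $U_{1,0}(2,2f)$, and hence in $\overline{U_{1,s}(2,2f)}$.

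The point of the specialization is that $S^{a-1}(\cO_C^{\oplus 2}) \cong \cO_C^{\oplus a}$, so
\[
S^{a-1}\cE_0 \cong (\cM^{a-1})^{\oplus a}.
\]
For any $\ccL_0 \in \Pic^{d(a,g,e)}(C)$, the line bundle $\cM^{a-1} \otimes \ccL_0$ has degree $(a-1)f + d(a,g,e) = g-1$. A generic choice of $\cM$ and of $\ccL_0$ places $\cM^{a-1} \otimes \ccL_0$ off the theta divisor $W^1_{g-1}(C) \subset \Pic^{g-1}(C)$, and thus
\[
h^0\big(S^{a-1}\cE_0 \otimes \ccL_0\big) = a \cdot h^0\big(\cM^{a-1} \otimes \ccL_0\big) = 0.
\]

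To transfer this vanishing to generic $\cE \in U_{1,s}(2,2f)$, I would invoke upper semicontinuity. The closure inclusion provides an irreducible algebraic family $\{\cE_t\}_{t \in T}$, after possibly passing to an \'etale cover of an open chart of the moduli problem to remedy the coarseness of $U(2,2f)$, through a point $t_0$ with $\cE_{t_0} = \cE_0$ and with $\cE_t \in U_{1,s}(2,2f)$ for $t$ in an open dense subset $T' \subset T$. Upper semicontinuity of $h^0$ applied to the family $\{S^{a-1}\cE_t \otimes \ccL_0\}_{t \in T}$ makes the locus $\{t \in T : h^0(S^{a-1}\cE_t \otimes \ccL_0) = 0\}$ open, non-empty (as it contains $t_0$), and hence meeting $T'$. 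Consequently the locus
\[
V := \{\cE \in U_{1,s}(2,2f) : \Theta_{S^{a-1}\cE} \neq \Pic^{d(a,g,e)}(C) \}
\]
is non-empty. Its complement $V^c$ is closed by upper semicontinuity of fiber dimension for the proper projection of the incidence variety $\{(\cE,\ccL) : h^0(S^{a-1}\cE \otimes \ccL) > 0\}$ onto $U_{1,s}(2,2f)$, so $V$ is open; the irreducibility of $U_{1,s}(2,2f)$ given by Proposition \ref{filtracio} then forces $V$ to be dense.

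The main obstacle I anticipate is the rigorous production of the algebraic family used above. Because $\cE_0$ is strictly polystable and $U(2,2f)$ is only a coarse moduli space, translating the scheme-theoretic closure containment $\cE_0 \in \overline{U_{1,s}(2,2f)}$ into an honest family of rank-two bundles on $C$ through $\cE_0$ requires passing to an \'etale cover or to a versal deformation of the polystable point. Once this is in place, the remainder of the argument is a routine combination of upper semicontinuity of cohomology with the decomposition $S^{a-1}(\cO_C^{\oplus 2}) = \cO_C^{\oplus a}$.
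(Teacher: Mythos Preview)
Your approach is essentially the same as the paper's: both specialize to a split polystable bundle of degree $2f$, compute the theta locus of its symmetric power directly, and transfer the vanishing back to the generic member of $U_{1,s}(2,2f)$ via openness together with the stratification of Proposition~\ref{filtracio}. The only difference is that the paper degenerates to $\ccL_1\oplus\ccL_2$ with two \emph{distinct} generic line bundles of degree $f$ (so that $S^{a-1}(\ccL_1\oplus\ccL_2)=\bigoplus_{k}\ccL_1^k\otimes\ccL_2^{a-1-k}$ and $\Theta_{S^{a-1}\cE}$ is contained in a union of $a$ translated theta divisors), whereas you take $\ccL_1=\ccL_2=\cM$ (so that $S^{a-1}(\cM\oplus\cM)=(\cM^{a-1})^{\oplus a}$ and the theta locus is a single translate); both choices work, and your computation is marginally cleaner.
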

\begin{proof} The condition that $ \Theta _{S^{a -1}\cE}$ is a proper divisor  is an open condition in the moduli space of vector bundles.
Using  Proposition \ref{filtracio} it suffices to prove the result for the smallest stratum of the moduli space of vector bundles
corresponding to those bundles with subbundles of the largest degree.
These bundles  are extensions of two line bundles of the same degree.
They  can be deformed to a direct sum $\ccL_1\oplus \ccL_2$  of two (different) generic line bundles each of degree $f$
 (consider the family of extensions of one of the line bundles by the other).
Then $S^{a -1}\cE$ is the set of tensors  in $(\ccL_1\oplus \ccL_2)^{\otimes (a -1)}$ that are invariant under the action of the symmetric group.
The set
\[\{\ \ccL\in \Pic^{d(a,g,e)}(C) \text{ such that } h^0\big(C,\ccL_1^k \otimes \ccL_2^{a -1-k}\otimes \ccL\big)>0\ \}\]
 is a theta divisor in the Jacobian. Moreover, $ \Theta _{S^{a -1}\cE}$ is contained in the union
of these theta divisors as $k$ varies. Thus,   it is still a divisor.
\end{proof}

\begin{corollary} \label{even}
Let $C$ be a curve of genus $g$,  $\cE$  a normalized rank two vector bundle on $C$ generic among those of degree $2f >0$ and
 $h:=aC_0+\frak b f$ a very ample divisor on $S\cong\Bbb P(\cE)$. Then, there is an Ulrich line bundle on $S$ with respect to $\cO_S(h)$.
\end{corollary}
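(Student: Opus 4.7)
The plan is to show that Corollary \ref{even} is a direct consequence of Propositions \ref{theta} and \ref{propoeven}, once one correctly identifies the Segre stratum to which a normalized rank two bundle of degree $2f$ belongs.

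First, I would record the elementary numerical check: since $\deg(\cE)=2f$, one has $e=-2f$, hence
\[
d(a,g,e)=g-1+\tfrac{(a-1)(-2f)}{2}=g-1-(a-1)f\in\ZZ,
\]
so the integrality hypothesis of Proposition \ref{theta} is automatic and there is no parity obstruction.

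The substantive (but short) step is to locate $\cE$ inside the stratification $U_{1,s}(2,2f)$ appearing in Proposition \ref{propoeven}. By the very definition of normalized, every line subbundle $\ccL\subset\cE$ satisfies $\deg(\ccL)\le 0$, while the distinguished nonzero section of $\cE$ provides a subbundle $\cO_C\subset\cE$ of degree $0$. Therefore the maximal degree of a line subbundle is exactly $0$, and
\[
s_{1}(\cE)=1\cdot 2f-2\cdot 0=2f.
\]
Thus $\cE\in U_{1,2f}(2,2f)$. The Nagata bound $-g\le e$ recalled in the introduction forces $2f\le g$, which places $s=2f$ in the admissible range $0<s\le g$ required by Proposition \ref{propoeven}. (Alternatively, one invokes Corollary \ref{normands}, which picks out the normalized representative in each stratum $U_{1,s}(2,d)$ and confirms $s=\deg\bar\cE\le g$.)

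With this identification in hand, Proposition \ref{propoeven}, applied in the stratum $U_{1,2f}(2,2f)$, yields that for $\cE$ generic in this stratum, $\Theta_{S^{a-1}\cE}$ is a proper divisor of $\Pic^{d(a,g,e)}(C)$. Proposition \ref{theta} then immediately produces an Ulrich line bundle on $S$ with respect to $\cO_S(h)$. I do not anticipate any substantive obstacle: the corollary is essentially a dictionary translation between the genericity statement on proper theta divisors already established and the cohomological characterization of Ulrich line bundles.
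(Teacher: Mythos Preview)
Your proof is correct and follows the same route as the paper's: reduce to Proposition~\ref{theta}, identify the Segre stratum containing a normalized bundle of degree $2f$, and invoke Proposition~\ref{propoeven}. The only difference is cosmetic---you compute $s_1(\cE)=2f$ directly from the definition of normalized, whereas the paper cites Corollary~\ref{normands} for the same identification (and in fact your explicit computation clarifies that the relevant stratum is $U_{1,2f}(2,2f)$, where the paper's terse ``$U_{1,e}(2,2f)$'' should be read as $U_{1,-e}(2,2f)$).
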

\begin{proof} From Proposition \ref{theta}, we need to check that   $ \Theta _{S^{a -1}\cE}$ is a proper divisor.
From Corollary \ref{normands}, it suffices to do this for a generic point of $U_{1,e}(2,2f)$.
From Proposition \ref{propoeven}, this holds.
\end{proof}

\vspace{3mm}

Let us now  consider a normalized rank two vector bundle $\cE$ of  odd degree $\deg(\cE)=-e=2f+1$ and $a \geq 2$ an integer.

\vspace{3mm}

\begin{proposition}
\label{propoodd} Let $C$ be a generic curve of genus g and $\cE$ be a generic normalized rank two vector bundle of degree $-e=2f+1$ on $C$.
Then for odd $a$, $ \Theta _{S^{a -1}\cE}$ is a proper divisor of  $ \Pic^{d(a,g,e)}(C)$ and
for even $a$, $ \Theta _{S^{a -1}\cE}$ is a proper divisor of  the moduli space $ U(2, 2g-2-(a -1)(2f+1))$.
\end{proposition}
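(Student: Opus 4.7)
The plan is to follow the same template as Proposition \ref{propoeven}, but with a crucial modification forced by the odd parity of $-e=2f+1$. The naive degeneration to a strictly semistable direct sum $\ccL_1\oplus\ccL_2$ with $\deg\ccL_2=\deg\ccL_1+1$ fails here: the twisted summands of $S^{a-1}\cE$ acquire degrees ranging over $[g-1-(a-1)/2,\,g-1+(a-1)/2]$, and for $a\ge 3$ the upper end exceeds $g-1$, so the ``theta'' of such a summand fills all of $\Pic$ and the union argument collapses. We must therefore stay within the stable locus rather than go all the way to the strictly semistable boundary.

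First I would apply Corollary \ref{thetatensorsym} to reduce to the case $\deg(\cE)=1$. By Proposition \ref{filtracio} together with the openness, in the moduli of $\cE$, of the condition ``$\Theta_{S^{a-1}\cE}$ is a proper divisor'', it suffices to exhibit a single $\cE$ in the smallest stable stratum $U_{1,1}(2,1)$ satisfying the conclusion. Such an $\cE$ is a non-split extension
$$0\longrightarrow\cO_C\longrightarrow\cE\longrightarrow\cO_C(p)\longrightarrow 0$$
for a generic $p\in C$, classified by $\xi\in H^1\big(C,\cO_C(-p)\big)$. This induces a filtration on $S^{a-1}\cE$ whose $a$ successive quotients are the line bundles $\cO_C(kp)$, $k=0,1,\ldots,a-1$, and a Riemann--Roch bookkeeping gives $\chi\big(S^{a-1}\cE\otimes\ccL\big)=0$ for $\ccL\in\Pic^{d(a,g,e)}(C)$, compatible with the expected theta divisor.

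For odd $a$, the crux is to show $h^0\big(S^{a-1}\cE\otimes\ccL\big)=0$ for a generic $\ccL$. Peeling off the filtration one step at a time via the long exact cohomology sequences, one obtains connecting morphisms which, up to canonical identifications, are cup-products with $\xi$; their injectivity on $H^0$ kills the excess sections contributed by the high-degree quotients $\cO_C(kp)\otimes\ccL$ (those with degree $>g-1$). The required injectivity reduces to the maximality of rank of the cup-product maps
$$H^0\big(C,\cO_C(kp)\otimes\ccL\big)\otimes H^1\big(C,\cO_C(-p)\big)\longrightarrow H^1\big(C,\cO_C((k-1)p)\otimes\ccL\big)$$
for each $k$, which I expect to follow from the genericity of $C$, $p$ and $\ccL$ by a direct dimension count of Brill--Noether type. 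The even-$a$ case is handled in parallel, twisting instead by a generic stable $\cF_1\in U\big(2,2g-2-(a-1)(2f+1)\big)$: the same filtration reduces the desired vanishing to analogous injectivity statements, combined with the generic existence of rank-two theta divisors extracted from \cite{Ray} (propagated back through Corollary \ref{thetatensorsym}). The main obstacle, in both parities, is exactly this injectivity of the cup-product maps against $\xi$ or its rank-two analogue; once it is secured, the proposition follows.
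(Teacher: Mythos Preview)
Your approach is genuinely different from the paper's. The paper does not stay on a smooth curve at all: it degenerates $C$ itself to a chain $C_0=C_1\cup\cdots\cup C_g$ of elliptic curves and builds an explicit limit $\cE_0$ whose restriction to $C_1$ is a generic indecomposable rank-two bundle of degree $1$ and whose restriction to every other component is a direct sum of two generic line bundles (of degree $0$ or $1$ according to a prescribed pattern). On $C_1$ Atiyah's classification is used to express $\otimes^{a-1}\cE_1$ as a sum of line bundles (resp.\ of copies of $\cE_1$ twisted by line bundles, when $a$ is even); on the other components the symmetric power is already split. The non-existence of a section of $S^{a-1}\cE_0\otimes\ccL_0$ (resp.\ $\otimes\cF_0$) is then reduced, via limit linear series, to a pure inequality on the total vanishing orders at the nodes, namely $(g-1)^2\le O\le (g-2)g$, which is contradictory. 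No cup-product or Petri-type input is used; this is also why the genericity of $C$ is genuinely needed in the statement.

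By contrast, the heart of your argument is left open. You need that for a generic non-split extension $0\to\cO_C\to\cE\to\cO_C(p)\to 0$ with class $\xi$, the iterated connecting maps in the filtration of $S^{a-1}\cE\otimes\ccL$ annihilate all the excess $H^0$ contributed by the quotients $\cO_C(kp)\otimes\ccL$ with $k>(a-1)/2$. You describe this as injectivity of cup products with $\xi$ and say you ``expect'' it from genericity by a Brill--Noether count, and you explicitly flag it as ``the main obstacle''. That is accurate: this is the hard step, and it is not routine. The relevant connecting maps are not literally cup-product with $\xi$ but with the derived extension classes in $\ext^1\big(\cO_C(kp),S^{k-1}\cE\big)$, which depend nonlinearly on $\xi$; as $a$ grows you need many of these to be simultaneously of maximal rank for a \emph{single} $\xi$ and a generic $\ccL$ (and, in the even-$a$ case, a generic rank-two $\cF_1$). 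No dimension count that handles this uniformly in $a$ is indicated, and none of the cited results supplies it. The paper's degeneration of the curve is precisely what sidesteps this difficulty: on a chain of elliptic curves the tensor powers split completely and the problem becomes a combinatorial bound on vanishing orders. As written, your proposal is a plausible outline whose decisive step is unproved.
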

\begin{proof} We start by deforming the curve $C$ to a chain $C_0$ of elliptic curves as follows.  Consider $C_1,\dots, C_g$ generic elliptic curves. Let $P_i, Q_i$ be generic points on $C_i$.
We construct a curve $C_0$ of arithmetic genus $g$ by identifying $P_i$ with $Q_{i-1}, i=2,\dots, g$. Let us now determine a generic normalized rank two  vector bundle $\cE_0$ of degree $2f+1$ on $C_0$. To this end,  take a generic  indecomposable vector bundle $\cE_1$ of rank 2 and degree 1 on $C_1$, a direct sum of two generic line bundles of degree one $\ccL_1^i\oplus \ccL_1^{'i}$ on $C_i, i=3, 5,\dots, 2f+1$; and a direct sum of two generic line bundles of degree zero $\ccL_0^i\oplus \ccL_0^{'i}$ on the remaining components
$C_i, i=2, 4, 6\dots 2f$ and $i=2f+2, 2f+3, \dots, g$. We take the gluing so that $\ccL_0^i, i=2f+2, 2f+3, \dots, g$ glue to each other but the gluings  are generic otherwise.
  One can then find a degree zero line subbundle $\ccL^i_0$ of $\ccL_1^i\oplus \ccL_1^{'i}$ on $C_i, i=3, 5,\dots, 2f+1$ that  glues with  both $\ccL^{i-1}_0$ and  $\ccL^{i+1}_0$ and a degree zero line subbundle of $\cE_{1}$ that glues with $\ccL_0^2$.
  Gluing these degree zero line subbundles on each component produces a degree zero line subbundle of the vector bundle $\cE_0$ on the chain.
One can in fact check that the largest degree of a subbundle of $\cE_0$ on $C_0$ is zero (see \cite{T} for  details). Hence, $\cE_0$ is a generic normalized rank two vector bundle on $C_0$ of degree $2f+1$.

In order to show that the theta divisor of the symmetric power of $\cE_0$ is an actual divisor, it suffices to find  a line bundle $\ccL_0$ on $C_0$ (resp.  vector bundle $\cF_0$) of the appropriate
degrees such  that $(S^{a -1}\cE_0)\otimes \ccL_0$
(resp ($S^{a -1}\cE_0)\otimes \cF_0$) do not have any limit linear section.

Notice that the vector bundle $\cE_0$ that we built on $C_0$ has restriction to $C_i, i=2,\dots, g$ a direct sum of two generic line bundles
$\cE_i=\ccL_j^i\oplus \ccL_j^{'i}$ of the same degree $j=0,1$
and has restriction to $C_1$ a generic indecomposable rank two vector bundle $\cE_{1}$ of  degree $1$.
The $(a -1)$-symmetric power of  $\cE_i, i>1$ is a subsheaf of $\oplus _{m+l=a -1}(\ccL_j^i)^m\otimes(\ccL_j^{'i})^l$.
The $(a -1)$-symmetric power of  $\cE_1$ is a subsheaf of the $(a -1)$-tensor power of $\cE_{ 1}$.
Since $\cE_1$ is a rank two vector bundle, $\cE_{1}^*\cong \cE_{ 1}\otimes \bar \ccL^{-1}$ where $\bar \ccL=\det \cE_{1}$. On the other hand,  it follows from \cite{A}; Lemma 22 that
 $ \cE_{1}\otimes \cE_{1}^*=\oplus_{i=0,\dots,3} \cM_i$ where the $\cM_i$ are the elements in $C_1$ of order 2.

 Therefore,
 \begin{equation} \label{odd1} \otimes ^{a -1} \cE_{ 1}=\begin{cases}  (\oplus_{i=0,\dots,3} \cM_i)^{\otimes k}\otimes \bar \ccL ^k &\text{ if } a =2k+1 \\
  (\oplus_{i=0,\dots,3} \cM_i)^{\otimes k}\otimes \cE_{1}\otimes \bar L ^k &\text{ if } a =2(k+1).    \end{cases}  \end{equation}

 A limit linear series of slope $g-1$ of the tensor product of $S^{a -1}\cE_0$ with an arbitrary  line bundle $\ccL_0$ of degree $g-1-\frac {(2f+1)(a -1)}2$
  (resp. an arbitrary rank two vector bundle $\cF_0$ of  degree $2(g-1)-(2f+1)(a -1)$) would give rise to a section on each component with proper vanishing at the nodes.

  Assume first that  $a $ is odd and therefore $a -1$ is even. According to (\ref{odd1}) and the construction of $\cE_0$,
   \[   (\otimes ^{a -1} \cE_{0}\otimes \ccL_0)_{|C_i}=\begin{cases}  (\oplus_{i=0,\dots,3} \cM_i)^{\otimes k}\otimes \bar \ccL ^k \otimes \ccL_{01} &\text{ if } i=1 \\
 \oplus _{m+l=a -1}(\ccL_j^i)^m \otimes (\ccL_j^{'i})^l \otimes \ccL_{0i} &\text{ if } i>1    \end{cases}  \]
 where $\ccL_{0i}$ denotes the restriction of $\ccL_0$ to $C_i$.

  On the other hand, the restriction $\ccL_{01}$  has degree  $g-1-\frac{a -1}2$ on $C_1$, the restriction $\ccL_{0i}$ has  degree $g-a$ on the components $C_3, C_5,\dots, C_{2f+1}$
 and degree $g-1$  on the remaining components $C_2, C_4, C_6\dots C_{2f}$ and  $C_{2f+2}, C_{2f+3}, C_g$. Moreover, it is generic of the stated degree on each of the components. Notice also that
  $\ccL_j^i, \ccL_j^{'i}, \bar \ccL$ are fixed determined by the generic choice of $\cE_0$. In addition,  the line bundles  $\cM_i$ form a subgroup, so their product is another element $\cM_j$ in this subgroup and their degree is zero. Hence, the degree of $\bar \ccL ^k \otimes  \ccL_{01}$ and of  $\ccL_j^i \otimes \ccL_j^{'i} \otimes \ccL_{0i}$ is $g-1 $.
 By the genericity of $\ccL_{0i}$, the sum of the orders of vanishing of a section of the line bundles $\cM_i\otimes \bar \ccL ^k \otimes \ccL_{01},  \ccL_j^i \otimes \ccL_j^{'i}\otimes \ccL_{0i}$
  at the two nodes of the elliptic curve is at most $g-2$.  In order to have a limit linear series, the order of vanishing at $Q_i$ of the sections on the component $C_i$ and  the order of vanishing at $P_{i+1}$ of the sections on the component $C_{i+1}$
  needs to be at least $g-1$.
  Therefore, the sum $O$ of vanishing orders at the nodes satisfies
  \[   (g-1)^2\le O\le (g-2)g\]
  which is impossible.

  Consider now the case in which $a $ is even and then $\cF_0$ is an arbitrary rank two  vector bundle of degree $2g-2-(2f+1)(a -1)$.
  We take $\cF_{0,1}$ on $C_1$ to be a generic vector bundle of  degree $2g-1-a$, $\cF_{0i}$
     the direct sum of two generic line bundles of degree $g-a$ on the components $C_3, C_5,\dots , C_{2f+1}$
   and the direct sum of two generic line bundles of degree $g-1$  on the remaining components $C_2, C_4, C_6\dots C_{2f}$ and on $C_{2f+2}, C_{2f+3},\dots , C_g$.
   Any two indecomposable vector bundles of rank two and odd degree differ in product with a line bundle (see \cite{A}; corollary to Theorem 7).
   Therefore, there exists a line bundle $\ccL_{0,1}$ of degree $g-1$ such that   $\cF_{0,1}=\cE_{0,1}\otimes \ccL_{0,1}$
   Then,
    \[   (\otimes ^{a -1} \cE_{0}\otimes \cF_0)_{|C_i}=\begin{cases}  (\oplus_{i=0,\dots,3} \cM_i)^{\otimes k+1}\otimes \bar \ccL ^k \otimes \ccL_{01} &\text{ if } i=1 \\
 \oplus _{m+l=a -1}(\ccL_j^i)^m \otimes (\ccL_j^{'i})^l \otimes (\ccL_{0i} \oplus \ccL_{0i}^{'}) &\text{ if } i>1.     \end{cases}  \]
   The same argument as before shows that this cannot have a limit section.

\end{proof}

Because our proof uses a deformation argument to a special kind of curve, we cannot conclude that the result is true for {\em every} curve.
It is likely though that, as in the case of even degree, the result holds for every curve.
 % \red {By means of the following example we will see that we can not avoid the assumption $C$ general} \red {\begin{example}\end{example}}

\begin{corollary} \label{odd}
Let $C$ be a generic curve of genus $g$,  $\cE$  a normalized rank two vector bundle on $C$ generic among those of degree $-e=2f+1 >0$ and $h:=aC_0+\frak b f$ a very ample divisor on $S\cong\Bbb P(\cE)$ with $a$ odd (i.e. $d(a,g,e) \in \ZZ$). Then, there is an Ulrich line bundle on $S$ with respect to $\cO_S(h)$.
\end{corollary}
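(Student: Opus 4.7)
The plan is to deduce this corollary as a direct combination of the characterization of Ulrich line bundles given in Proposition \ref{theta} with the generic existence of a proper theta divisor established in Proposition \ref{propoodd}. Since all the genuine work has already been done in the proof of Proposition \ref{propoodd} (via degeneration to a chain of elliptic curves and an analysis of limit linear series on the nodal limit), there is no substantive obstacle here; the statement is essentially a translation of the theta-divisor result into the language of Ulrich line bundles on ruled surfaces.

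First I would verify the integrality hypothesis needed to invoke Proposition \ref{theta}. Since $a$ is odd by assumption, $a-1$ is even, so $(a-1)e$ is even regardless of the parity of $e=-(2f+1)$, and therefore $d(a,g,e)=g-1+(a-1)e/2\in\ZZ$. In particular the locus $\Theta_{S^{a-1}\cE}$ sits naturally inside $\Pic^{d(a,g,e)}(C)$, exactly the ambient space used in Proposition \ref{theta}.

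Next I would apply Proposition \ref{propoodd} in its odd-$a$ branch: for $C$ generic of genus $g$ and $\cE$ a generic normalized rank two vector bundle on $C$ of odd degree $-e=2f+1$, the locus $\Theta_{S^{a-1}\cE}$ is a proper divisor of $\Pic^{d(a,g,e)}(C)$. The hypotheses of the corollary match those of Proposition \ref{propoodd} verbatim, so no further work is needed at this step. Finally, Proposition \ref{theta} converts this properness into the existence of an Ulrich line bundle on $S$ with respect to $\cO_S(h)$, which completes the argument. The only point worth emphasising is that one must use the odd-$a$ branch of Proposition \ref{propoodd} (not the even-$a$ branch, which would land in $U(2,2g-2-(a-1)(2f+1))$ and correspond to a different situation outside the scope of this corollary).
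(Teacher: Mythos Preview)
Your proof is correct and follows exactly the same approach as the paper, which simply cites Proposition~\ref{theta} and Proposition~\ref{propoodd}. Your additional remarks verifying that $d(a,g,e)\in\ZZ$ and that the odd-$a$ branch of Proposition~\ref{propoodd} is the relevant one are accurate and helpful clarifications.
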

\begin{proof} It follows from Proposition \ref{theta} and Proposition \ref{propoodd}
\end{proof}

\vspace{3mm}

Now we are ready to give a positive answer to Question \ref{mainquestion} stated in the introduction

\begin{theorem}
\label{respq1} Let $a$, $g$, and $e$ be integers such that $-g \leq e < 0$, $a \geq 2$ and $\frac{(a-1)e}{2} \in \ZZ$. Then there exist a geometrically ruled surface   $S \rightarrow C$  over a curve $C$ of genus $g$ with  invariant $e$ such that $S$ is the support of an Ulrich line bundle with respect to
$ \cO_S(aC_0+\frak b f)$.
\end{theorem}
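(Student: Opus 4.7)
The plan is to dispatch Theorem \ref{respq1} as an immediate consequence of Corollaries \ref{even} and \ref{odd}, by case-splitting on the parity of $e$. The integrality condition $\tfrac{(a-1)e}{2} \in \ZZ$ is equivalent to the assertion that $(a-1)e$ is even, so either $e$ is even or $a$ is odd (or both). By Proposition \ref{theta}, in either situation it suffices to produce a curve $C$ of genus $g$ and a normalized rank two vector bundle $\cE$ on $C$ of degree $-e$ for which $\Theta_{S^{a-1}\cE}$ is a proper divisor of the relevant moduli space. Once such $\cE$ is fixed, a very ample divisor $h = aC_0 + \frak b f$ is obtained by choosing $\frak b$ of sufficiently large degree, a standard consequence of the ampleness criteria for divisors on geometrically ruled surfaces with $e < 0$ (cf.\ \cite{Ha}, Chapter V).

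When $e$ is even I would write $-e = 2f$, and note that Nagata's bound forces $0 < 2f \le g$. On any curve $C$ of genus $g$ the existence of normalized rank two bundles of degree $2f$ is guaranteed by Proposition \ref{filtracio} combined with Corollary \ref{normands}, with the extremal endpoint $2f=g$ being handled directly by Nagata's attainment statement recalled in the introduction. Corollary \ref{even}, applied to a generic such $\cE$, then produces the Ulrich line bundle on $S=\Bbb P(\cE)$. When $e$ is odd the integrality hypothesis forces $a$ to be odd, and I would write $-e = 2f+1$ with $0<2f+1\le g$. Choosing $C$ generic of genus $g$ and $\cE$ a generic normalized rank two bundle of degree $2f+1$ on $C$ (again available by the same stratification results), Corollary \ref{odd} then delivers the Ulrich line bundle on $S=\Bbb P(\cE)$.

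The proof therefore contains no substantial new calculations: the main conceptual point is the observation that the parity dichotomy imposed by $(a-1)e \in 2\ZZ$ matches exactly the parity dichotomy between Corollaries \ref{even} and \ref{odd}. The only mild technicality is verifying that normalized bundles of every admissible degree $s$ with $0<s\le g$ exist on some curve of genus $g$, and this is already built into the moduli stratification of Proposition \ref{filtracio} together with the boundary case supplied by Nagata. Consequently I do not expect a serious obstacle, and the whole argument should amount to a short paragraph essentially transcribing the case analysis above.
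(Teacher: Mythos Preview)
Your proposal is correct and follows essentially the same route as the paper: the paper's proof consists of the single sentence ``It follows from Corollary \ref{even} and Corollary \ref{odd},'' and your argument is precisely this case split on the parity of $e$, with the observation that $(a-1)e$ even forces $a$ odd when $e$ is odd. The additional justifications you supply (existence of normalized bundles of every admissible degree via Proposition \ref{filtracio}/Corollary \ref{normands} and Nagata, and existence of a very ample $h$ for large $\deg(\frak b)$) are exactly the points the paper leaves implicit.
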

\begin{proof} It follows from Corollary \ref{even} and Corollary \ref{odd}
\end{proof}

%%%%%%%%%%%%%%%%%%%%%%%%%%%%%%%%%%%%%%%%%%%%%%%%%%%%%%%%%%%%%%%%%%%%%%%%
\section{Stable rank $2$ Ulrich bundles}
\label{sRank2}
In this section, we focus our attention on the existence of special stable rank $2$  Ulrich bundles. The existence is known for $g=0$ and $a\ge2$ see \cite{Cs4}, Theorem 1.2. When $a=1$ and $g=0$ there are no such bundles, actually it is known (see \cite{F--M}, Corollary to Theorem B), that each Ulrich bundle of rank at least $2$ is in this case strictly $\mu$-semistable. Finally, when $g=1$ the existence of $\mu$-stable special Ulrich bundles of rank $2$ is proved in \cite{Cs5}; Theorem 1.2.

\vspace{3mm}

First of all, we study the existence of $\mu$-stable rank $2$ Ulrich bundles on $S$ with respect to $h:=aC_0+\frak b f$ for $a \geq 2$. To this end, we set
$$
u_e:=
\left\lbrace
\begin{array}{ll}
ae\quad&\text{if $e>0$,}\\
e\quad&\text{if $e\le0$,}
\end{array}
\right.
$$

\vspace{3mm}

With this notation we have the following result:

\vspace{3mm}

\begin{theorem}
\label{tACMR}
Let $C$ be a curve of genus $g\ge1$, $\cE$ a normalized rank $2$ bundle on $C$ and $h:=aC_0+\frak b f$ a very ample divisor on $S\cong\Bbb P(\cE)$.

If $a=2$ or $a\ge3$ and
$$
\deg(\frak b)>\max\left\{ \ \frac{(a-3)(g-1)+e a}{2},(g-1)+u_e ,\frac{e(3a+1)}{6}+\frac{2g}{3} \ \right\},
$$
then, for each general 0-dimensional subscheme $Z\subseteq S$ of degree $(a-1)(\deg(\frak b)-e a/2)$ and each general $\frak v\in\Pic^{g-1}(C)$, there exist $\mu$-stable special Ulrich bundles $\cF$ of rank $2$ with respect to $\cO_S(h)$ fitting into the exact sequence
$$
0\longrightarrow\cO_S(aC_0+(\frak b+\frak v)f)\longrightarrow \cF\longrightarrow \cI_{Z\vert S}((2a-2)C_0+(2\frak b+\frak k+\frak e-\frak v)f)\longrightarrow 0.
$$
\end{theorem}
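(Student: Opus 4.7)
The plan is to build $\cF$ by a Serre / Cayley--Bacharach extension. Set
\[
\cL_1:=\cO_S(aC_0+(\frak b+\frak v)f),\qquad \cL_2:=\cO_S((2a-2)C_0+(2\frak b+\frak k+\frak e-\frak v)f);
\]
a direct computation shows $\cL_1\otimes\cL_2\cong\cO_S(3h+K_S)$, so any rank-two locally free $\cF$ fitting into an extension $0\to\cL_1\to\cF\to\cI_{Z|S}\otimes\cL_2\to0$ automatically has $\det\cF\cong\cO_S(3h+K_S)$, i.e.\ is self-Ulrich-dual, and is therefore \emph{special} once it is known to be Ulrich. The proof then reduces to three tasks: producing the locally free extension, verifying the Ulrich cohomology vanishings, and checking $\mu$-stability.

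For the first task I would observe that the Euler characteristic condition $\chi(\cF(-h))=0$---necessary for any Ulrich bundle---forces $|Z|=\chi(\cL_1(-h))+\chi(\cL_2(-h))$. Since $\cL_1(-h)\cong\cO_S(\frak v f)$ has $\chi=0$ when $\deg\frak v=g-1$, and the projection formula together with Riemann--Roch on $C$ give $\chi(\cL_2(-h))=\chi((S^{a-2}\cE)(\frak b+\frak k+\frak e-\frak v))=(a-1)(\deg(\frak b)-ea/2)$, the length $(a-1)(\deg(\frak b)-ea/2)$ in the statement is precisely what is expected. Locally free extensions are classified, via the local-to-global spectral sequence, by those classes in $\ext^1(\cI_{Z|S}\otimes\cL_2,\cL_1)$ that surject onto the stalks of $\Ext^1$ at each point of $Z$; this is the classical Cayley--Bacharach condition for $Z$ with respect to $|K_S+\cL_2-\cL_1|$, which is satisfied for a general $Z$ of the prescribed length provided an appropriate lower bound on $\deg(\frak b)$ holds.

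For the Ulrich vanishings, the self-duality $\cF\cong\cF^*\otimes\cO_S(3h+K_S)$ combined with Serre duality gives $h^j(\cF(-2h))=h^{2-j}(\cF(-h))$, so one only needs $h^1(\cF(-h))=h^2(\cF(-h))=0$. Through the extension these reduce to: $h^1(\cL_1(-h))=h^1(C,\cO_C(\frak v))=0$ for $\frak v$ general of degree $g-1$; the trivial vanishing $h^2(\cL_2(-h))=0$ by Serre duality, since the dual is a line bundle with negative $C_0$-coefficient; and the crucial generic vanishing $h^1(C,(S^{a-2}\cE)(\frak b+\frak k+\frak e-\frak v))=0$. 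Once this last holds, $Z$ general of length $h^0(\cL_2(-h))=\chi(\cL_2(-h))$ imposes independent conditions, yielding $h^0(\cI_{Z|S}\otimes\cL_2(-h))=h^1(\cI_{Z|S}\otimes\cL_2(-h))=0$. The core obstruction is precisely the generic vanishing on the symmetric power---the bound $\deg(\frak b)>(g-1)+u_e$ is designed exactly to place this twist of $S^{a-2}\cE$ in the range where its generic translate is non-special, by arguments close in spirit to those of Section~\ref{sRank1}.

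Finally, for $\mu$-stability I would exploit openness of stability in families and exhibit a single stable member. Suppose $\cG=\cO_S(G)$ is a rank-one saturated destabilizing subsheaf. Composition with the extension yields either an injection $\cG\hookrightarrow\cL_1$ or a non-zero map $\cG\to\cI_{Z|S}\otimes\cL_2\hookrightarrow\cL_2$; in each case $D_i-G$ is a non-zero effective divisor on $S$. The destabilizing inequality $2G\cdot h\ge(3h+K_S)\cdot h$, combined with the third bound $\deg(\frak b)>\tfrac{e(3a+1)}{6}+\tfrac{2g}{3}$, rules out both possibilities numerically via effective-cone estimates on $S$, in the spirit of \cite{Cs4} and \cite{Cs5}. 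The main obstacle will be the delicate balancing of the three numerical bounds on $\deg(\frak b)$: they play distinct roles---Cayley--Bacharach, generic cohomological vanishing, and numerical stability---and the subtlety is to coordinate them with the generality assumptions on $Z$ and $\frak v$ so that all three hold simultaneously for the same bundle.
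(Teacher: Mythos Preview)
Your construction of $\cF$ via a Serre/Cayley--Bacharach extension and the verification of the Ulrich conditions follow essentially the same route as the paper (which in turn refers to \cite{A--C--MR}). One small slip: after invoking self-duality, the conditions reduce to $h^0(\cF(-h))=h^1(\cF(-h))=0$, not $h^1=h^2=0$; but since you then check both $h^0$ and $h^1$ via the extension, the argument still goes through.

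The genuine gap is in the stability part. Your plan is to take a single $\cF$, assume a destabilizing $\cO_S(G)\subset\cF$, and rule it out by ``effective-cone estimates'' combined with the inequality $2G\cdot h\ge(3h+K_S)\cdot h$. This cannot work as a purely numerical argument: bundles in the family that \emph{are} strictly $\mu$-semistable do exist whenever Ulrich line bundles exist on $S$ (e.g.\ for $e\le0$ with $(a-1)e$ even), since the extension of one Ulrich line bundle by its Ulrich dual has the same Chern classes as $\cF$. So no inequality depending only on intersection numbers, and not on the specific choice of $Z$ and $\frak v$, can exclude such a $G$.

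The paper handles stability very differently. It uses the structural fact that an Ulrich bundle is automatically $\mu$-semistable and that any destabilizing subsheaf must itself be Ulrich; the candidates are therefore the explicit Ulrich line bundles of Proposition~\ref{pExistence}. One of the two types is excluded immediately by the $C_0$-coefficient. For the other, the paper performs a \emph{dimension count}: it computes $\dim(\bP)$ of the full family of extensions (this is where the bound $\deg(\frak b)>\tfrac{(a-3)(g-1)+ea}{2}$ enters, to control $h^1$ of the relevant twist of $\cI_{Z|S}$), bounds the dimension of the family of strictly semistable $\cF$'s sitting in exact sequences \eqref{seqSemistable} using Clifford-type estimates, and shows the latter is strictly smaller precisely under the hypothesis $\deg(\frak b)>\tfrac{e(3a+1)}{6}+\tfrac{2g}{3}$. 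Thus the third bound is not an effective-cone bound but a parameter-count bound, and the role you assign to the first bound (Cayley--Bacharach) is in fact that of pinning down the dimension of $\bP$. You should either carry out this dimension comparison or, if you insist on a direct argument, make the use of the generality of $Z$ explicit: show that for $G$ an Ulrich line bundle of the second type, a general $Z$ of the given length cannot lie on any divisor in $|\cL_2-G|$.
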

\begin{proof}
The same argument used in \cite{A--C--MR}, Proposition 3.3 and Theorem 3.4  can be extended almost verbatim to prove that for any $e$ these extensions define rank $2$ special Ulrich bundles. The unique point where that proof must be slightly modified is when the ampleness of  $\cO_S(aC_0+(\frak b-\frak v)f)$ is needed. In order to infer such an ampleness we need $\deg(\frak b)-g+1>ae/2$ if $e \leq 0$ and $\deg(\frak b)-g+1>ae$ if $e > 0$. Both inequalities are true since
$$
\deg(\frak b) > (g-1)+u_e.
$$

We need to prove that bundles in the statement are $\mu$-stable. They are certainly $\mu$-stable in the range $e>0$ and $a\ge2$. In fact, Ulrich vector bundles are always $\mu$-semistable and they can only be destabilized by Ulrich line bundles which, in this range, do not exist by \cite{A--C--MR}; Theorem 2.1.

In order to prove the statement for $a \geq 2$ and $e \leq 0$, we compute the dimension of the entire family $\bP$ of bundles defined in the statement
and we  estimate the dimension of the subfamily of strictly $\mu$-semistable ones.

First, we check that a general $\cF$ belongs to one single extension. To this end, it suffices to prove that
\[
h^0\big(S,\mathcal{I}_{Z|S}((a-2)C_0+(\frak b+\frak k-2\frak v+\frak e))\big)=0
\]
for general $Z$ and $\frak v$. Observe that
\begin{align*}
h^0\big(S,\mathcal{O}_S((a-2)C_0+(\frak b+\frak k-2\frak v+\frak e))\big)&=
h^0\big(C,(S^{a-2}\cE)(\frak b+\frak k-2\frak v+\frak e)\big)\le \\
&\le \sum_{i=0}^{a-2}
h^0\big(C,\cO_C(\frak b+\frak k-2\frak v+(i+1)\frak e)\big).
\end{align*}

On the other hand,  $\mathrm{deg}(\frak b+\frak k-2\frak v+(i+1)\frak e)=\mathrm{deg}(\frak b)-(i+1)e>g-1$ by the assumption $\deg(\frak b) > (g-1)+e$. Hence $h^1\big(C,\cO_C(\frak b+\frak k-2\frak v+(i+1)\frak e)\big)$ will be zero for all $i$ with $0 \leq i \leq a-2$, and for a generic choice of $\frak v$. Thus,  applying the Riemann--Roch theorem, we obtain
\[
h^0\big(S,\mathcal{O}_S((a-2)C_0+(\frak b+\frak k-2\frak v+\frak e))\big)\le (a-1)\mathrm{deg}(\frak b)-\frac{a(a-1)e}{2}+(a-1)(1-g)\le \deg(Z).
\]
Hence, for a generic choice of $Z$, we have $h^0\big(S,\mathcal{I}_{Z|S}((a-2)C_0+(\frak b+\frak k-2\frak v+\frak e))\big)=0$.

The bundles $\cF$  are parameterized by a projective bundle $\bP$ with typical fibre the projectivization of
\begin{align*}
\ext^1_S\big(\cI_{Z\vert S}((2a-2)C_0+(2\frak b&+\frak k+\frak e-\frak v)f),\cO_S(aC_0+(\frak b+\frak v)f)\big)\cong\\
&\cong H^1\big(S, \cI_{Z\vert S}((a-4)C_0+(\frak b+2\frak k+2\frak e-2\frak v)f)\big)
\end{align*}
over an open subset of the product of the Hilbert scheme of subschemes of $S$ of dimension $0$ and degree $(a-1)(\deg(\frak b)-e a/2)$ multiplied by $\Pic^{g-1}(C)$.

Let $a\ge3$. In the proof of \cite{A--C--MR}; Theorem 3.4,  the authors prove that
\begin{equation}
\label{Ample}
\begin{aligned}
h^0\big(S,\cO_S((a-4)C_0+(\frak b&+2\frak k+2\frak e-2\frak v)f)\big)=\\
&=(a-3)\left(g-1+\deg(\frak b)-\frac{ea} 2\right),\\
h^1\big(S,\cO_S((a-4)C_0+(\frak b&+2\frak k+2\frak e-2\frak v)f)\big)=\\
&=h^2\big(S,\cO_S((a-4)C_0+(\frak b+2\frak k+2\frak e-2\frak v)f)\big)=0.
\end{aligned}
\end{equation}

It follows from the first equality \eqref{Ample}  and the hypothesis on $2\deg(\frak b) > (a-3)(g-1)+e a$ that
\begin{equation}
\label{Ideal2}
h^0\big(S, \cI_{Z\vert S}((a-4)C_0+(\frak b+2\frak k+2\frak e-2\frak v)f)\big)=0
\end{equation}
for a general choice of $Z$.

Let $a=2$. Trivially $h^0\big(S,\cO_S(-2C_0+(\frak b+2\frak k+2\frak e-2\frak v)f)\big)=0$. Moreover for $i\ge1$
\begin{align*}
h^i\big(S,\cO_S(-2C_0+(\frak b+2\frak k+2\frak e-2\frak v)f)\big)&=h^{2-i}\big(S,\cO_S((-\frak b-\frak k-\frak e+2\frak v)f)\big)\\
&=h^{2-i}\big(C,\cO_C(-\frak b-\frak k-\frak e+2\frak v)\big)\\
&=h^{i-1}\big(C,\cO_C(\frak b+2\frak k+\frak e-2\frak v)\big).
\end{align*}
We have $\deg(\frak b+2\frak k+\frak e-2\frak v)=\deg(\frak b)-e+2g-2$, thus
\begin{equation}
\label{a=2}
\begin{gathered}
h^1\big(S,\cO_S(-2C_0+(\frak b+2\frak k+2\frak e-2\frak v)f)\big)=\deg(\frak b)-e+g-1,\\
h^2\big(S,\cO_S(-2C_0+(\frak b+2\frak k+2\frak e-2\frak v)f)\big)=0.
\end{gathered}
\end{equation}

In both cases above ($a=2$ and $a\ge3$), the cohomology of the sequence
\begin{align*}
\label{seqStandard}
0\longrightarrow\cI_{Z\vert S}\longrightarrow \cO_S\longrightarrow \cO_Z\longrightarrow 0
\end{align*}
tensored by $\cO_S((a-4)C_0+(\frak b+2\frak k+2\frak e-2\frak v)f)$ and the equalities \eqref{Ample}, \eqref{Ideal2}, \eqref{a=2} yield
$$
h^1\big(S, \cI_{Z\vert S}((a-4)C_0+(\frak b+2\frak k+2\frak e-2\frak v)f)\big)=2\deg(\frak b)-(a-3)(g-1)-ae.
$$

Therefore, since $\bP$ depends on the schemes $Z$, the bundles $\mathfrak u$ and the extension classes, it follows that
\begin{eqnarray}
\label{DimensionFamily}
\dim(\bP)=2(a-1)\left(\deg(\mathfrak b)-\frac{ea}{2}\right)+g+(2\deg(\frak b)-(a-3)(g-1)-ae-1)\\
\nonumber
=2a\deg(\frak b)-ea^2-(a-4)(g-1).
\end{eqnarray}

From now on we will assume that  $\cF$ is strictly $\mu$-semistable. If this is the case, it contains an Ulrich line bundle and, in particular, by Proposition \ref{pExistence} this implies that  $(a-1)e$ is even. Recall also that we are under the assumption $e\le 0$.

Ulrich line bundles are given by Proposition \ref{pExistence}. We cannot have
$$
\cO_S((2a-1)C_0+(\frak b+\frak u)f)\subseteq\cF
$$
for in this case there would necessarily be a non-zero morphism from $\cO_S((2a-1)C_0+(\frak b+\frak u)f)$ to either $\cO_S(aC_0+(\frak b+\frak v)f)$ or $\cO_S((2a-2)C_0+(2\frak b+\frak k+\frak e-\frak v)f)$ which is a contradiction since $H^0(\cO_S((1-a)C_0+(\frak v- \frak u)f))=0$ and $H^0(\cO_S(-C_0+(\frak b+\frak k+\frak e-\frak v-\frak u)f))=0$.

Thus, we necessarily have   $\cO_S((a-1)C_0+(2\frak b+\frak k+\frak e-\frak u)f)\subseteq \cF$ and since their quotient must be also an Ulrich line bundle we get an exact sequence of the form
\begin{equation}
\label{seqSemistable}
0\longrightarrow\cO_S((a-1)C_0+(2\frak b+\frak k+\frak e-\frak u)f)\longrightarrow\cF\longrightarrow\cO_S((2a-1)C_0+(\frak b+\frak u)f)\longrightarrow0.
\end{equation}
By the projection formula,
\begin{align*}
\ext^1_S\big(\cO_S((2a-1)C_0+(\frak b+\frak u)f)&,\cO_S((a-1)C_0+(2\frak b+\frak k+\frak e-\frak u)f)\big) \\
&\cong H^1\big(S,\cO_S(-aC_0+(\frak b+\frak k+\frak e-2\frak u)f)\big) \\
&\cong H^1\big(S,\cO_S((a-2)C_0+(2\frak u-\frak b)f)\big)\\
&\cong H^1\big(C,(S^{a-2}\cE)(2\frak u-\frak b)\big).
\end{align*}

The extensions as in the exact sequence \eqref{seqSemistable} are parameterised by a space of dimension
\begin{align*}
h^1\big(C,(S^{a-2}\cE)&(2\frak u-\frak b)\big)-1=\\
&=(a-1)\left(\deg(\frak b)-\frac{ae}2+1-g\right)-1+h^0\big(C,(S^{a-2}\cE)(2\frak u-\frak b)\big)\\
&\le (a-1)\left(\deg(\frak b)-\frac{ae}2+1-g\right)-1+\sum_{i=0}^{a-2}h^0\big(C,\cO_C(2\frak u-\frak b+i\frak e)\big),
\end{align*}
where the inequality follows from the inequality \eqref{BoundSymmetric}.

We have $(a-1-i)e\le 0$, hence the Riemann--Roch theorem and the Clifford theorem imply
\begin{align*}
h^0\big(C,\cO_C(2\frak u-\frak b+i\frak e)\big)\le g-1+\max\left\{\ (a-1-i)e-\deg(\frak b),\frac{(a-1-i)e-\deg(\frak b)}2+1\ \right\},
\end{align*}
for each general choice of $\frak u$.

Since $e \leq 0$,  $\deg(\frak b)\ge0$ and then
 $$ h^0\big(C,\cO_C(2\frak u-\frak b+i\frak e)\big)\le \frac{(a-1-i)e-\deg(\frak b)}2 + g. $$
 Hence the strictly $\mu$-semistable bundles we are interested in are parameterised by a family of dimension at most
$$
\frac{a-1}2\left(\deg(\frak b)+2-\frac{ae}2\right)-1.
$$

The  dimension of the family of strictly $\mu$-semistable bundles is smaller than the value $\dim(\bP)$ given by the equality \eqref{DimensionFamily} if
\[
\frac{a}{2}\left(3\deg(\frak b)-\frac{e}{2}(3a+1)-2g\right) +4(g-1)+\frac{b}{2}+2>0,
\]
condition which is automatically satisfied, since $3\deg(\frak b)\ge \frac{e(3a+1)}{2}+2g$, by hypothesis.
%Thus, since $\deg(\frak b) > g-1+e$, the estimated dimension is smaller than the value $\dim(\bP)$ given by the equality \eqref{DimensionFamily}.

Since a general $\cF$ is uniquely determined by an extension in $\mathbb P$, we conclude that there are $\mu$-stable Ulrich bundles.
\end{proof}

Finally, we consider slightly different extensions to remove the restriction on $\mathrm{deg}(\frak b)$ for $e=0$.  To this purpose, we will make use of the following characterization of special Ulrich bundles of rank $2$ in our particular setup.

\begin{lemma}
\label{lUlrichSpecial}
Let $C$ be a curve of genus $g$, $\cE$ a normalized rank $2$ vector bundle on $C$ and $h:=aC_0+\frak b f$ a very ample divisor on $S\cong\Bbb P(\cE)$.

A rank $2$ vector bundle $\cF$ on $S$ is a special Ulrich bundle with respect to $\cO_S(h)$ if and only if $h^0\big(S,\cF(-h)\big)=0$ and
\begin{equation}
\label{eqSpecialBundle}
\mathrm{det}(\cF)=\cO_S(3h+K_S),\qquad c_2(\cF)=\frac{1}2(5h^2+3hK_S)+2-2g.
\end{equation}
\end{lemma}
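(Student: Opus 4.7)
The strategy mirrors that of Lemma~\ref{lUlrichLine}: combine Serre duality with the self-duality forced by the determinant condition to collapse the four Ulrich vanishings to a single one, then use Riemann--Roch to encode it as a numerical equality on $c_2$. The initial remark, which I would state first, is that for any rank $2$ bundle $\cF$ one has $\cF^{*}\cong\cF\otimes\det(\cF)^{-1}$. Hence the self-duality condition $\cF\cong\cF^{*}(3h+K_S)$ appearing in the definition of a \emph{special} Ulrich bundle is tautologically equivalent, in rank $2$, to the determinant equality $\det(\cF)=\cO_S(3h+K_S)$.

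For the necessity I would simply extract $h^0(S,\cF(-h))=0$ from the Ulrich condition with $j=0$ and obtain $\chi(\cF(-h))=0$ from the full vanishing; Hirzebruch--Riemann--Roch on the ruled surface $S$ (using $\chi(\cO_S)=1-g$) then turns this Euler characteristic identity into the displayed $c_2$ formula, given the determinant. This part is a routine numerical translation.

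The content is in the converse. Assuming the three stated conditions, Serre duality combined with $\cF^{*}\cong\cF(-3h-K_S)$ yields the identifications
$$h^{1}\big(S,\cF(-2h)\big)\cong h^{1}\big(S,\cF(-h)\big)^{*},\qquad h^{2}\big(S,\cF(-2h)\big)\cong h^{0}\big(S,\cF(-h)\big)^{*},$$
so of the four vanishings required to be Ulrich only $h^1(\cF(-h))=0$ remains to be proved. To this end I would invoke the very ampleness of $h$: an effective $H\in|h|$ gives a short exact sequence $0\to\cF(-2h)\to\cF(-h)\to\cF(-h)|_H\to 0$ whose long cohomology sequence yields $H^0(\cF(-2h))\hookrightarrow H^0(\cF(-h))=0$, and then Serre duality again produces $h^2(\cF(-h))=h^0(\cF(-2h))^{*}=0$. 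The hypothesis on $c_2$ is equivalent, via Riemann--Roch, to $\chi(\cF(-h))=0$, and together with $h^0(\cF(-h))=h^2(\cF(-h))=0$ this forces $h^1(\cF(-h))=0$, completing the proof.

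There is no genuine obstacle; the only thing to be careful about is the bookkeeping of the Serre-dual pairings and the Riemann--Roch computation that turns $\chi(\cF(-h))=0$ into the stated $c_2$ equality. The whole argument is the rank-$2$ analog of Lemma~\ref{lUlrichLine}.
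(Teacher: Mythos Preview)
Your argument is correct. The paper itself does not give a proof here: it refers to \cite{Cs4}, Corollary~2.2 and \cite{A--C--MR}, Lemma~3.2, and only adds the observation that those references assume $\cF$ is \emph{initialized} (i.e.\ $h^0(S,\cF)\ne0$ in addition to $h^0(S,\cF(-h))=0$), whereas the first of these two conditions is automatic from Riemann--Roch once the second and the Chern class equalities hold. What you have written is, in effect, a self-contained reconstruction of the argument behind those references: the determinant condition gives $\cF^{*}(3h+K_S)\cong\cF$, Serre duality then identifies $h^{i}\big(S,\cF(-2h)\big)$ with $h^{2-i}\big(S,\cF(-h)\big)$, the restriction to $H\in|h|$ kills $h^0\big(S,\cF(-2h)\big)$ and hence $h^2\big(S,\cF(-h)\big)$, and finally $\chi(\cF(-h))=0$ (which is the content of the $c_2$ equation) forces $h^1\big(S,\cF(-h)\big)=0$. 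So your route is not genuinely different from the one the paper invokes by citation; the gain is that your version stands on its own without the external references. A cosmetic point: you write $h^{i}(\cdots)^{*}$ with lowercase $h$, mixing dimensions with dual vector spaces; either switch to $H^{i}$ for the duality isomorphisms or drop the~$^{*}$.
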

\begin{proof}
For the proof we refer the interested reader to  \cite{Cs4}, Corollary 2.2 or \cite{A--C--MR}, Lemma 3.2.
The proof therein is given under the apparently more restrictive hypothesis that $\cF$ is initialized, i.e. $h^0\big(S,\cF\big)\ne0$ and $h^0\big(S,\cF(-h)\big)=0$.
Only the second vanishing is actually necessary. Indeed the condition $h^0\big(S,\cF\big)\ne0$ follows easily from the Riemann--Roch theorem applied to $\cF$.
\end{proof}

We will end  by proving the existence of rank two Ulrich bundles for $e=0$ without the restriction on  $\mathrm{deg}(\frak b)$.

\begin{theorem}
\label{tStable}
Let $C$ be a curve of genus $g\ge1$, $\cE$ a normalized rank $2$ bundle of degree zero on $C$ and $h:=aC_0+\frak b f$ a very ample divisor on $S\cong\Bbb P(\cE)$ with $a\ge 2$. Let $\alpha:=\left[a/2\right]$ and $\epsilon:=a-2\alpha$.

Then for each general subscheme $Z\subseteq S$ of dimension $0$ of degree $(\alpha+\epsilon)\deg(\frak b)$ and each general $\frak v\in\Pic^{g-1}(C)$, there exist $\mu$-stable special Ulrich bundles $\cF$ of rank $2$ with respect to $\cO_S(h)$ fitting into the exact sequence
$$
0\longrightarrow\cO_S((3\alpha-1+\epsilon) C_0+(\frak b+{\frak v}+\frak e)f)\longrightarrow \cF\longrightarrow \cI_{Z\vert S}((3\alpha-1+2\epsilon) C_0+(2\frak b+\frak k-{\frak v})f)\longrightarrow 0.
$$
\end{theorem}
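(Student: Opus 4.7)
The plan is to adapt the strategy of Theorem \ref{tACMR}: construct a family $\bP$ of extensions of the stated form, verify via Lemma \ref{lUlrichSpecial} that a generic member is a special Ulrich bundle, and then show by a dimension count that a generic member is $\mu$-stable. The point of the new, more balanced splitting of $3h + K_S$ into pieces with $C_0$-coefficients $3\alpha - 1 + \epsilon$ and $3\alpha - 1 + 2\epsilon$ is that, after twisting by $-h$, the sub and quotient have nonnegative $C_0$-coefficients $\alpha - 1$ and $\alpha - 1 + \epsilon$, which is precisely what removes the lower bound on $\deg(\frak b)$ that was needed in Theorem \ref{tACMR}.

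To apply Lemma \ref{lUlrichSpecial}, I first verify the Chern-class identities: a direct intersection computation using $C_0^2 = 0$ and $\deg(\frak e) = 0$ gives $c_1(\cF) = (3a-2)C_0 + (3\frak b + \frak k + \frak e)f = 3h + K_S$ and $c_2(\cF) = (5a-3)\deg(\frak b) + (3a-2)(g-1)$, matching $\tfrac{1}{2}(5h^2 + 3hK_S) + 2 - 2g$. For $h^0(\cF(-h)) = 0$, I twist the defining sequence by $-h$ to obtain a sub $\cA := \cO_S((\alpha-1)C_0 + (\frak v + \frak e)f)$ and a quotient $\cB := \cI_{Z|S}((\alpha - 1 + \epsilon)C_0 + (\frak b + \frak k - \frak v)f)$. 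The $h^0$ of $\cA$ is bounded via \eqref{Projection} and \eqref{BoundSymmetric} by $\sum_{i=0}^{\alpha-1} h^0(C, \cO_C(\frak v + (i+1)\frak e))$, each summand vanishing for generic $\frak v$ since the underlying line bundle has degree exactly $g-1$. For $\cB$, the key point is that $S^{\alpha-1+\epsilon}\cE$ is $\mu$-semistable of slope zero (as $e = 0$), so a generic twist by $\cO_C(\frak b + \frak k - \frak v)$ of sufficiently positive degree has vanishing $h^1$; Riemann--Roch gives $h^0 = (\alpha + \epsilon)\deg(\frak b) = \deg(Z)$, and a generic $Z$ imposes independent conditions, forcing $h^0$ of the ideal-sheaf twist to vanish.

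For $\mu$-stability, I compute $\dim(\bP)$ by summing the dimension $2(\alpha+\epsilon)\deg(\frak b)$ of the relevant Hilbert scheme, the dimension $g$ of $\Pic^{g-1}(C)$, and $\dim \bP(\mathrm{Ext}^1) = \dim \mathrm{Ext}^1 - 1$, the last term computed via the projection formula. A strictly $\mu$-semistable $\cF \in \bP$ contains an Ulrich line subbundle (Proposition \ref{pExistence}); there are two candidate shapes, and $\cO_S((2a-1)C_0 + (\frak b + \frak u)f)$ is ruled out by a direct Hom computation, since both the composition into the quotient of $\cF$ and the projection onto the sub of $\cF$ factor through $\cO_S$ of a divisor with strictly negative $C_0$-coefficient ($-\alpha$ and $-(\alpha+\epsilon)$). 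Hence only $\cO_S((a-1)C_0 + (2\frak b + \frak k + \frak e - \frak u)f)$ can arise, forcing $\cF$ to lie in an extension of two Ulrich line bundles parametrized by $H^1(C, (S^{a-2}\cE)(2\frak u - \frak b))$; the dimension of this space is bounded using \eqref{BoundSymmetric} together with Clifford and Riemann--Roch applied to each summand $\cO_C(2\frak u - \frak b + i\frak e)$, all of which share the same degree $2(g-1) - \deg(\frak b)$ since $e = 0$.

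The main obstacle is this final dimension comparison: showing that the bound on the dimension of the strictly semistable locus is strictly less than $\dim(\bP)$ for \emph{every} admissible $(a, \deg(\frak b))$, without any lower bound on $\deg(\frak b)$, which is precisely the gain over Theorem \ref{tACMR}. The balanced splitting inflates $\mathrm{Ext}^1(\cB, \cA)$ by a term of size linear in $\deg(\frak b)$ relative to the splitting used in Theorem \ref{tACMR}, and the peculiarity of $e = 0$ makes every line-bundle summand in the Clifford estimate share the same degree, so these two features together should yield the required strict inequality.
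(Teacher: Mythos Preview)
Your construction of $\cF$ and the verification that it is special Ulrich (Chern-class identities and $h^0(\cF(-h))=0$) match the paper's proof essentially verbatim. The divergence is in the $\mu$-stability argument, and this is where your proposal has a genuine gap.

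You follow the dimension-count strategy of Theorem~\ref{tACMR}: bound the dimension of the family of extensions of the two Ulrich line bundles by Clifford/Riemann--Roch, and compare with $\dim(\bP)$. You yourself flag the final inequality as the ``main obstacle'' and only assert that it ``should'' hold. Two things are missing: first, as in the proof of Theorem~\ref{tACMR} you would need to check that a general $\cF$ sits in a single extension in $\bP$, otherwise $\dim(\bP)$ overcounts the bundles; second, the inequality itself is not established, and for small $\deg(\frak b)$ and large $g$ the Clifford bound on $h^0\big(C,(S^{a-2}\cE)(2\frak u-\frak b)\big)$ contributes a term of order $(a-1)g$, so the comparison is not automatic.

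The paper bypasses the dimension count entirely. It shows \emph{directly} that $\Hom(\mathcal L,\cF)=0$ for every Ulrich line bundle $\mathcal L$ and every $\cF$ built from general $(\frak v,Z)$. For $\mathcal L\cong\cO_S((2a-1)C_0+(\frak b+\frak u)f)$ this is your own argument on $C_0$-coefficients. For $\mathcal L\cong\cO_S((a-1)C_0+(2\frak b+\frak k+\frak e-\frak u)f)$ the paper computes
\[
\cO_S(D)\otimes\mathcal L^\vee\cong\cO_S\big(\alpha C_0+(-\frak b+\frak u+\frak v-\frak k)f\big),\qquad
\cO_S(A+D)\otimes\mathcal L^\vee\cong\cO_S\big((\alpha+\epsilon)C_0+(\frak u-\frak v-\frak e)f\big).
\]
For the first, every line-bundle summand in \eqref{BoundSymmetric} has degree $\le -\deg(\frak b)<0$, so $h^0=0$ for all $\frak u$. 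For the second, the summands have degree $0$; hence $h^0=0$ unless $\frak u=\frak v-(i-1)\frak e$ for some $0\le i\le\alpha+\epsilon$, which singles out at most finitely many $\frak u$. For those exceptional $\frak u$ one has $h^0\big(S,\cO_S(A+D)\otimes\mathcal L^\vee\big)\le\alpha+\epsilon+1\le(\alpha+\epsilon)\deg(\frak b)=\deg(Z)$ (using only $\deg(\frak b)\ge3$, which is automatic from very ampleness), so a general $Z$ forces $h^0\big(S,\cI_{Z\vert S}(A+D)\otimes\mathcal L^\vee\big)=0$. Thus $\cF$ is $\mu$-stable with no comparison of family dimensions and no lower bound on $\deg(\frak b)$. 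Replacing your dimension count by this direct check closes the gap.
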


\begin{proof}
Let $\cO_S({\frak v})\in\frak U=\Pic^{g-1}(C)\setminus W^1_{g-1}(C)$  be a non-effective line bundle on $C$.

We define the following two divisors on $S$
\begin{gather*}
D:=(3\alpha-1+\epsilon) C_0+(\frak b+{\frak v}+\frak e)f,\qquad A:=\epsilon C_0+(\frak b+\frak k-2{\frak v}-\frak e)f.
\end{gather*}
We trivially have $A+2D=3h+K_S$.

We observe that $\deg(\frak b)\ge 3$. In fact,  the restriction of $\cO_S(h)$ to $C_0$ is very ample: since $\pi_{\vert C_0}$ is an isomorphism on $C$ and it follows that
\begin{equation}
\label{BoundPositive}
\deg(\frak b)=hC_0\ge3.
\end{equation}

It is clear that $h^0\big(S,\cO_S(A+K_S)\big)=0$. Thus, by \cite{H--L}; Theorem 5.1.1 there exists a rank $2$ vector bundle  $\cF$  fitting into the  exact sequence
\begin{equation}
\label{seqSpecial}
0\longrightarrow \cO_S(D)\longrightarrow \cF\longrightarrow \cI_{Z\vert S}(A+D)\longrightarrow 0.
\end{equation}

Simple computations show that the equalities \eqref{eqSpecialBundle} are satisfied. We will show that $h^0\big(S,\cF(-h)\big)=0$ and that $\cF$ is $\mu$-stable for a general choice of ${\frak v}$ and $Z$.

We now prove that $h^0\big(S,\cF(-h)\big)=0$. To this purpose we will check that
$$
h^0\big(S,\cO_S(D-h)\big)=0 \quad \mbox{and } h^0\big(S,\cI_{Z\vert S}(D+A-h)\big)=0
$$
for a general choice of ${\frak v}$ and $Z$.

We have $D-h=(\alpha-1)C_0+({\frak v}+\frak e)f$. For a general ${\frak v}$ the inequality \eqref{BoundSymmetric} yields
$$
h^0\big(S,\cO_S(D-h)\big)\le \sum_{i=0}^{\alpha-1}h^0\big(C,\cO_C({\frak v}+(i+1)\frak e)\big).
$$
Since $\deg({\frak v}+(i+1)\frak e)=g-1$, choosing $\frak v$ such that $\frak v+(i+1)\frak e\in \Pic^{g-1}(C)\setminus W^1_{g-1}$ for all $i=0,\ldots,\alpha-1$, we  obtain $h^0\big(S,\cO_S(D-h)\big)=0$.

On the other hand,  $D+A-h=(\alpha-1+\epsilon) C_0+(\frak b+\frak k-{\frak v})f$.  Hence again the inequality \eqref{BoundSymmetric} yields
$$
h^0\big(S,\cO_S(D+A-h)\big)\le \sum_{i=0}^{\alpha-1+\epsilon}h^0\big(C,\cO_C(\frak b+\frak k-{\frak v}+i\frak e)\big).
$$

Since $\mathrm{deg}(\frak b)\ge 3$ (see the inequality \eqref{BoundPositive}) and $\mathrm{deg}(\frak e)=0$, it follows that $\deg({\frak v}-\frak b-i\frak e)\le g-1$ for all $i$, which implies
$$
h^1\big(C,\cO_C(\frak b+\frak k-{\frak v}+i\frak e)\big)=h^0\big(C,\cO_C({\frak v}-\frak b-i\frak e)\big)=0,
$$
for general ${\frak v}$. Thus $h^0\big(C,\cO_C(\frak b+\frak k-{\frak v}+i\frak e)\big)=\deg(\frak b)$ and an easy computation shows
$$
h^0\big(S,\cO_S(D+A-h)\big)\le  (\alpha+\epsilon)\deg(\frak b).
$$
Thus for a general choice of $Z$,  $h^0\big(S,\cI_{Z\vert S}(D+A-h)\big)=0$. In particular, by Lemma \ref{lUlrichSpecial}, $\cF$ is a special Ulrich bundle of rank $2$.

\vspace{3mm}

We now show that a general $\cF$ is $\mu$-stable. If an Ulrich bundle of rank $2$ is not $\mu$-stable, then it contains a proper Ulrich subbundle of rank $1$. We will show that for each Ulrich line bundle $\mathcal L$ on $S$ we have $h^0\big(S,\cF\otimes\mathcal L^\vee\big)=0$. Since $\cF$ fits into the exact sequence \eqref{seqSpecial}, this will follow if we prove that
\begin{equation*}
\label{Vanishing}
h^0\big(S,\cO_S(D)\otimes\mathcal L^\vee\big)=0 \quad \mbox{and }  h^0\big(S,\cI_{Z\vert S}(A+D)\otimes\mathcal L^\vee\big)=0.
\end{equation*}
The Ulrich line bundles on $S$ are described in Proposition \ref{pExistence}. Notice that $\cO_C(\frak u)\in \Pic^{g-1}(C)$ because $\cE$ is a bundle of degree $e=0$.

If
$$
\mathcal L\cong\cO_S((2a-1)C_0+(\frak b+\frak u)f)=\cO_S((4\alpha+2\epsilon-1)C_0+(\frak b+\frak u)f),
$$
then the required vanishings follow by looking at the coefficient of $C_0$ in $\cO_S(D)\otimes\mathcal L^\vee$ and $\mathcal{O}_S(A+D)\otimes\mathcal L^\vee$ respectively. Now let
$$
\mathcal L\cong\cO_S((a-1)C_0+(2\frak b+\frak k-\frak u+\frak e)f)=\cO_S((2\alpha-1+\epsilon) C_0+(2\frak b+\frak k-\frak u+\frak e)f).
$$
The inequality \eqref{BoundSymmetric} implies
\begin{align*}
h^0\big(S,\cO_S(D)\otimes\mathcal L^\vee\big)\le \sum_{i=0}^{\alpha}h^0\big(C,\cO_C(-\frak b+\frak u+{\frak v}-\frak k+i\frak e)\big).
\end{align*}
Trivially
$$
\deg(-\frak b+\frak u+{\frak v}-\frak k+i\frak e)\le-\deg(\frak b)\le -1,
$$
hence $h^0\big(C,\cO_C(-\frak b+\frak u+{\frak v}-\frak k+i\frak e)\big)=0$ which implies that $h^0\big(S,\cO_S(D)\otimes\mathcal L^\vee\big)=0$.
Finally $\cO_S(A+D)\otimes\mathcal L^\vee\cong \cO_S((\alpha+\epsilon) C_0+(\frak u-{\frak v}-\frak e)f)$ hence the inequality \eqref{BoundSymmetric} yields
\begin{align*}
h^0\big(S,\cO_S(A+D)\otimes\mathcal L^\vee\big)\le \sum_{i=0}^{\alpha+\epsilon}h^0\big(C,\cO_C(\frak u-{\frak v}+(i-1)\frak e)\big).
\end{align*}

We have $e=0$, so that  $\deg(\frak u-{\frak v}+(i-1)\frak e)=0$. If $\frak u\ne {\frak v}-(i-1)\frak e$, then
$$
h^0\big(S,\cI_{Z\vert S}(A+D)\otimes\mathcal L^\vee\big)\le h^0\big(S,\cO_S(A+D)\otimes\mathcal L^\vee\big)=0.
$$

If $\frak u= {\frak v}-(i-1)\frak e$, then $\cO_S((\alpha+\epsilon) C_0+(\frak u-{\frak v}-\frak e)f)\cong\cO_S((\alpha+\epsilon) C_0+(-i\frak e)f)$. If, moreover $\frak e\ne \mathcal O_C$
then
\begin{align*}
h^0\big(S,\cO_S(A+D)\otimes\mathcal L^\vee\big)\le \sum_{i=0}^{\alpha+\epsilon}h^0\big(C,\cO_C((i-1)\frak e)\big)=0.
\end{align*}
Finally, if $\frak u= {\frak v}-(i-1)\frak e$ and $\frak e = \mathcal O_C$ then
$$
h^0\big(S,\cO_S(A+D)\otimes\mathcal L^\vee\big)\le \alpha+\epsilon+1\le (\alpha+\epsilon)\deg(\frak b)=z
$$
(recall that $\deg(\frak b)\ge 3$ when $e\ge0$). Thus, if we start our construction of $\cF$ from a set $Z$ of $z$ points not lying on any divisor in the classes of $(\alpha+\epsilon) C_0$, then again
$$
h^0\big(S,\cI_{Z\vert S}(A+D)\otimes\mathcal L^\vee\big)\le \mathrm{max}\{0, h^0\big(S,\cO_S(A+D)\otimes\mathcal L^\vee\big) - z\}=0.
$$
%
%Finally, we prove stability in the case $a=1$ which implies $\alpha=0$, $\epsilon=1$ and the degree of $Z$ equals the degree of $\frak b$. The defining extension is now
%\[
%0\longrightarrow\cO_S((\frak b+{\frak v}+\frak e)f)\longrightarrow \cF\longrightarrow \cI_{Z\vert S}(C_0+(2\frak b+\frak k-{\frak v})f)\longrightarrow 0.
%\]
%The possible destabilising Ulrich subbundles are in this case $\mathcal L\cong\cO_S(C_0+(\frak b+\frak u)f)$ or $\mathcal L\cong\cO_S((2\frak b+\frak k-\frak u+\frak e)f)$.
\end{proof}

%\begin{remark}
%We spend here a few words about the restriction $\deg(\frak b)\ge e+g+1$ when $e<0$.
%
%It is trivial to check that if $g\le3$, then the minimal degree of a very ample line bundle on $C$ is at least $g+1>e+g+1$. If $g=4,5$, then the same is true due to the Castelnuovo bound. Similarly if $C$ is hyperelliptic (see \cite{A--C--G--H}, Exercise V.B.2).
%
%When $C$ is a general curve of genus $g\ge4$ the variety $W^r_d$ is non-empty if and only if the Brill--Noether number $\varrho:=g-(r+1)(g-d+r)$ is non-negative (see \cite{A--C--G--H}, Theorem V.1.5) and, if $r\ge3$, then the general element in $W^r_d$ is very ample (see \cite{A--C--G--H}, Theorem V.1.8). It follows that the minimal degree of a very ample linear system on $C$ is
%$$
%d:=\left\lceil\frac34g\right\rceil+3.
%$$
%If $g\le11$, then $d\ge g+1$, but if $g\ge12$ this is no longer true. Moreover, when $g\ge6$ there exist curves endowed with a very ample line bundle of degree at most $g$. For example, a smooth plane quintic has genus $6$, the residual curve with respect to a conic in a general complete intersection of two cubics in $\p3$ is a curve of degree $7$ and genus $5$.
%
%Nevertheless, we are not able to construct a very ample line bundle on $S$ for which $\deg(\frak b)\le e+g$, because we do not have an efficient way to check the very ampleness on $S$ when $g\ge2$.
%\end{remark}


\begin{thebibliography}{44}

\bibitem{A--C--MR}
 M. Aprodu, L. Costa, R.M. Mir\'{o}--Roig: \emph{Ulrich bundles on ruled surfaces}. J. Pure Appl. Algebra \textbf{222} (2018), 131--138.

 \bibitem{A}
M.F. Atiyah: \emph{Vector bundles over an elliptic curve}. Proc. London Math. Soc. \textbf{7} (1957), 414--452.

\bibitem{Bea}
A. Beauville: {\it Determinantal hypersurfaces}: Michigan Math. J., {\bf 48} (2000), 39-64.

\bibitem{C--H2}
 M. Casanellas, R. Hartshorne, F. Geiss, F.O. Schreyer: \emph{Stable Ulrich bundles}. Int. J. of Math. \textbf{23} 1250083 \rm(2012).

\bibitem{Cs4}
G. Casnati: \emph{Special Ulrich bundles on non-special surfaces with $p_g=q=0$}. Int. J. Math. \textbf{28} (2017), 1750061.

\bibitem{Cs5}
G. Casnati: \emph{Ulrich bundles on non-special surfaces with $p_g=0$ and $q=1$}.
Rev. Mat. Complut. (2017). https://doi.org/10.1007/s13163-017-0248-z
%Available at arXiv:1707.06429 [math.AG].

\bibitem{E--H}
D. Eisenbud, J. Harris: \emph{3264 \& all that: a second course in algebraic geometry}.
Cambridge U.P. (2016).

\bibitem{ES03}
D. Eisenbud, F.-O. Schreyer, and appendix by J. Weyman: \emph{Resultants and Chow forms via exterior syzygies}. J. Amer. Math. Soc. \textbf{16} (2003), 537--579.

\bibitem{F--M}
D. Faenzi, F. Malaspina: \emph{Surfaces of minimal degree of tame representation type and mutations of Cohen-Macaulay modules}. Adv. Math. \textbf{310} (2017), 663--695.

\bibitem{Ha}
R. Hartshorne: \emph{Algebraic geometry}. G.T.M. 52, Springer \rm (1977).

\bibitem{H--L}
D. Huybrechts, M. Lehn: \emph {The geometry of moduli spaces of sheaves. Second edition}. Cambridge Mathematical Library, Cambridge Univ. Press \rm (2010).

\bibitem{Na}
M. Nagata: \emph{On self--intersection numbers of a section on a ruled surface}. Nagoya Math. J., \textbf{37} (1970), 191--196.

\bibitem{Po}
M. Popa:   \emph{On the base locus of the generalized theta divisor}. C. R. Acad. Sci. Paris S\'er. I Math. \textbf{329} (1999), no. 6, 507--512.

\bibitem{Ray}
M. Raynaud: \emph{Sections des fibr\'es vectoriels sur une courbe}. Bull. Soc. Math. France, \textbf{110} (1982), 103--125.

\bibitem{RT}
B. Russo, M. Teixidor: \emph{On a conjecture of Lange}. J. Algebraic Geom. \textbf{8} (1999), 483--496.

\bibitem{T}
M. Teixidor: \emph{Stable extensions by line bundles}. J. Reine Angew. Math. \textbf{502} (1998), 163--173.



\end{thebibliography}
\end{document}